\numberwithin{equation}{section}
  \theoremstyle{definition}
  \newtheorem{defn}{\protect\definitionname}[section]
  \theoremstyle{plain}
  \newtheorem{lem}{\protect\lemmaname}[section]
  \theoremstyle{remark}
  \newtheorem{rem}{\protect\remarkname}[section]
  \providecommand{\definitionname}{Definition}
  \providecommand{\lemmaname}{Lemma}
  \providecommand{\remarkname}{Remark}
\begin{document}

\title{Note on the Signatures of Rough Paths in a Banach Space }

\author{Horatio Boedihardjo%
\thanks{Department of Mathematics and Statistics, Reading University, England.
The main part of his contribution on this note was made while at Oxford-Man
Institute, Eagle House, Walton Well Road, Oxford OX2 6ED, England.
\protect \\
Email: h.s.boedihardjo@reading.ac.uk %
} , Xi Geng%
\thanks{Mathematical Institute, Woodstock Road, Oxford OX2 6GG and Oxford-Man
Institute, Eagle House, Walton Well Road, Oxford OX2 6ED, England.\protect \\
Email: xi.geng@maths.ox.ac.uk %
}, Terry Lyons%
\thanks{Mathematical Institute, Woodstock Road, Oxford OX2 6GG and Oxford-Man
Institute, Eagle House, Walton Well Road, Oxford OX2 6ED, England.\protect \\
Email: terry.lyons@oxford-man.ox.ac.uk %
} and Danyu Yang%
\thanks{Oxford-Man Institute, Eagle House, Walton Well Road, Oxford OX2 6ED,
England.\protect \\
Email: danyu.yang@oxford-man.ox.ac.uk %
}}
\maketitle
\begin{abstract}
We prove some results, which are used in \cite{our uniqueness paper },
about weakly geometric rough paths that are well-known in finite dimensions,
but need proof in the infinite dimensional setting. 
\end{abstract}

\section{Introduction }

Lyons' rough path theory \cite{MR1654527} investigates the meaning
of the controlled differential equation 
\begin{equation}
\mathrm{d}y_{t}=V\left(y_{t}\right)\mathrm{d}x_{t}\label{eq:controlled differential equation}
\end{equation}
when $x$ does not have a derivative. There are two natural classes
of paths $x$ for which the equation (\ref{eq:controlled differential equation})
is defined and the operation of integration satisfies the chain rule
of classical calculus: the \textit{geometric rough paths} and the
\textit{weakly geometric rough paths.} Let $p\geq1$. The $p$-geometric
rough paths (respectively $p$-weakly geometric rough paths) are the
geometric rough paths (respectively weakly geometric rough paths)
that have finite $p$-variation (see Definition \ref{weakly geometric rough paths}
below). The \textit{signature} of a rough path $x$ on the interval
$\left[0,T\right]$, defined formally as 
\[
S\left(x\right)=1+\int_{0}^{T}\mathrm{d}x_{t_{1}}+\int_{0}^{T}\int_{0}^{t_{2}}\mathrm{d}x_{t_{1}}\otimes\mathrm{d}x_{t_{2}}+\ldots,
\]
(see Definition \ref{signature definition} for precise definition)
which arises naturally in rough path theory. In finite dimensions,
a $p$- weakly geometric rough path is a $p^{\prime}$-geometric rough
path for all $p^{\prime}>p$ (see \cite{FV06}). As the map $x\rightarrow S\left(x\right)$
is continuous in the $p$-variation topology for all $p$, most results
known for the signatures of bounded variation path will automatically
extend to $p$-geometric rough paths for all $p$, and hence will
also hold for $p^{\prime}$-weakly geometric rough paths for all $p^{\prime}$.
This tool for proving properties of signatures of weakly geometric
rough paths break down in infinite dimensions. It is not known whether,
in infinite dimensions, there exists weakly geometric rough paths
that are not $p$-geometric rough paths for any $p$. As a result,
to complete the proofs for the main result in \cite{our uniqueness paper },
it becomes necessary to check that the certain properties of signatures
of weakly geometric rough paths still holds in infinite dimensions.
In the time between writing the first draft of this article and posting
this article, the article \cite{Cass infinite dimensional rp} has
appeared which came up with the idea of using a Lemma similar to Lemma
\ref{finite dimensional projection} to extend algebraic identity
in finite dimension to infinite dimensions. For the convenience of
the reader, we will leave the proof of Lemma \ref{finite dimensional projection}
and its application in this paper.

\subsection*{Notations and main results }

For vector spaces $A$ and $B$, we shall use $A\otimes_{a}B$ to
denote the algebraic tensor product of $A$ and $B$, namely, 
\[
A\otimes_{a}B=\mbox{span}\left\{ a\otimes b:a\in A,\; b\in B\right\} .
\]
Let $V$ be a Banach space. We assume that each $V^{\otimes_{a}k}$
is equipped with a norm $\left\Vert \cdot\right\Vert _{V^{\otimes_{a}k}}$
so that:

1. The family $\left\{ \left\Vert \cdot\right\Vert _{V^{\otimes_{a}k}}:k\geq0\right\} $
satisfies 
\begin{equation}
\|a\otimes b\|_{V^{\otimes_{a}(m+n)}}\leq\|a\|_{V^{\otimes_{a}m}}\cdot\|b\|_{V^{\otimes_{a}n}}\label{eq:admissible norm}
\end{equation}
 for $a\in V^{\otimes_{a}m}$ and $b\in V^{\otimes_{a}n}$. 

2. Let $n\in\mathbb{N}$. For any permutation $\sigma$ on $\left\{ 1,\ldots,n\right\} $
and $v_{1},\ldots,v_{n}\in V$,
\[
\left\Vert v_{1}\otimes\ldots\otimes v_{n}\right\Vert _{V^{\otimes_{a}n}}=\left\Vert v_{\sigma\left(1\right)}\otimes\ldots\otimes v_{\sigma\left(n\right)}\right\Vert _{V^{\otimes_{a}n}}.
\]
For $m,n\in\mathbb{N}$, if we define $V^{\otimes m}\otimes V^{\otimes n}$
to be the completion of $V^{\otimes m}\otimes_{a}V^{\otimes n}$ under
the norm $\|\cdot\|_{V^{\otimes_{a}(m+n)}},$ we have 
\[
V^{\otimes_{a}(m+n)}\subset V^{\otimes m}\otimes_{a}V^{\otimes n}\subset V^{\otimes(m+n)}.
\]
It follows that 
\[
V^{\otimes m}\otimes V^{\otimes n}\backsimeq V^{\otimes(m+n)},
\]
where $\backsimeq$ denotes the isomorphism as Banach spaces. Let
$\tilde{T}^{\left(n\right)}\left(V\right)$ denote the truncated formal
series of tensors whose scalar component is $1$, that is 
\[
\tilde{T}^{\left(n\right)}\left(V\right)=1\oplus V\oplus V^{\otimes2}\oplus\ldots\oplus V^{\otimes n}.
\]
Let $\tilde{T}((V))$ be the space of formal series of tensors, which
are sequences $\left(a_{j}\right)_{j=0}^{\infty}$with $a_{0}=1$
and $a_{j}\in V^{\otimes j}$ equipped with the addition and multiplication
\begin{eqnarray*}
\left(a+b\right)_{i} & = & a_{i}+b_{i};\\
\left(a\otimes b\right)_{i} & = & \sum_{i=0}^{k}a_{i}\otimes b_{k-i}.
\end{eqnarray*}
We will use $\pi^{\left(n\right)}$ and $\pi_{n}$ to denote the projection
maps from the formal series of tensors $\tilde{T}\left(\left(V\right)\right)$
onto the truncated tensors $\tilde{T}^{\left(n\right)}\left(V\right)$
and $V^{\otimes n}$ respectively. 

We will define the space $L_{a}^{n}\left(V\right)$ of Lie polynomials
over $V$ of degree $n$ inductively so that if $\left[x,y\right]=x\otimes y-y\otimes x$,
then
\begin{eqnarray*}
L_{a}^{1}\left(V\right) & = & V\\
L_{a}^{n+1}\left(V\right) & = & \mbox{Span}\left\{ \left[x,y\right]:x\in L_{a}^{n}\left(V\right),\; y\in V\right\} .
\end{eqnarray*}
We will let 
\begin{eqnarray*}
\mathcal{L}_{a}^{n}\left(V\right) & = & \bigoplus_{i=1}^{n}L_{a}^{i}\left(V\right)
\end{eqnarray*}
The exponential map on $\tilde{T}^{\left(n\right)}\left(V\right)$
is defined by 
\[
\exp\left[l\right]=\pi^{\left(n\right)}\left[\sum_{j=0}^{n}\frac{l^{\otimes j}}{j!}\right],
\]
with the convention that $l^{\otimes0}:=\mathbf{1}$. Define a function
$\left\Vert \cdot\right\Vert :\tilde{T}^{\left(n\right)}\left(V\right)\rightarrow\mathbb{R}$
by

\begin{eqnarray}
\|\cdot\|_{\tilde{T}^{n}\left(V\right)}: & = & \max_{1\leq k\leq n}\|\pi_{k}\left(\cdot\right)\|_{V^{\otimes k}}^{\frac{1}{k}}.\label{eq:norm for tensors}
\end{eqnarray}
We will let $G^{\left(n\right)}$ denote $\overline{\exp\mathcal{L}_{a}^{\lfloor p\rfloor}\left(V\right)}$. 
\begin{defn}
\label{weakly geometric rough paths}We say a path $x:\left[0,T\right]\rightarrow\tilde{T}^{\left(n\right)}\left(V\right)$
has finite $p$-variation if 
\[
\sup_{\mathcal{P}}\left(\sum_{t_{j}\in\mathcal{P}}\|x_{t_{j}}^{-1}x_{t_{j+1}}\|_{\tilde{T}^{\left(n\right)}\left(V\right)}^{p}\right)^{\frac{1}{p}}<\infty.
\]
Let $p\geq1$. We say a path $x:\left[0,T\right]\rightarrow\tilde{T}^{\left(\lfloor p\rfloor\right)}\left(V\right)$
is a $p$-weakly geometric rough path if $x_{t}\in G^{\left(\lfloor p\rfloor\right)}$
and $x$ has finite $p$-variation. 
\end{defn}
We are mainly interested in the following special transform of weakly
geometric rough paths.
\begin{defn}
\label{signature definition}(Lyons, Theorem 2.2.1 \cite{MR1654527})
For all $p$-weakly geometric rough paths $x:\left[0,T\right]\rightarrow\tilde{T}^{\left(\lfloor p\rfloor\right)}\left(V\right)$
and all $n\geq\lfloor p\rfloor$, there exists a unique $S_{n}\left(x\right)_{0,\cdot}:\left[0,T\right]\rightarrow\tilde{T}^{\left(n\right)}\left(V\right)$
such that $S_{n}\left(x\right)_{0,0}=\mathbf{1}$, $S_{n}$ has finite
$p$-variation and 
\[
\pi^{\left(\lfloor p\rfloor\right)}\left(S_{n}\left(x\right)_{0,t}\right)=x_{0}^{-1}x_{t}.
\]
 for all $t$. The unique element $S\left(x\right)_{0,T}\in\tilde{T}\left(\left(V\right)\right)$
such that $\pi^{\left(n\right)}\left(S\left(x\right)_{0,T}\right)=S_{n}\left(x\right)_{0,T}$
for all $n$ is called the signature of the path $x$. 
\end{defn}

\subsection*{Weakly geometric rough path in Banach space }

The results we will prove in this section are: 
\begin{lem}
\label{symmetric norm}The function $\left\Vert \cdot\right\Vert _{\tilde{T}^{\left(n\right)}\left(V\right)}$,
defined in (\ref{eq:norm for tensors}), is symmetric on $G^{\left(n\right)}$
in the sense that for all $g\in G^{\left(n\right)}$, 
\[
\left\Vert g\right\Vert _{\tilde{T}^{\left(n\right)}\left(V\right)}=\left\Vert g^{-1}\right\Vert _{\tilde{T}^{\left(n\right)}\left(V\right)}.
\]

\end{lem}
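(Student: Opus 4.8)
The plan is to recognise $g^{-1}$ as the image of $g$ under the antipode (the sign-twisted reversal of tensor letters) and then to push the symmetry of the tensor norms through it. First I would reduce to the dense subset of $G^{(n)}$ consisting of genuine exponentials. Inversion in $\tilde{T}^{(n)}(V)$ is given by the finite Neumann series $g^{-1}=\sum_{j=0}^{n}(\mathbf{1}-g)^{\otimes j}$, which is legitimate because $\mathbf{1}-g$ has vanishing scalar component and is therefore nilpotent of order $\leq n+1$ in $\tilde{T}^{(n)}(V)$. Hence $g\mapsto g^{-1}$ is continuous, and so are $g\mapsto\|g\|_{\tilde{T}^{(n)}(V)}$ and $g\mapsto\|g^{-1}\|_{\tilde{T}^{(n)}(V)}$. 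Since $G^{(n)}=\overline{\exp\mathcal{L}_{a}^{n}(V)}$, it suffices to establish the identity for $g=\exp(l)$ with $l\in\mathcal{L}_{a}^{n}(V)$.

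Next I would introduce the grading-preserving linear map $\alpha$ on $\tilde{T}^{(n)}(V)$ determined by $\alpha(\mathbf{1})=\mathbf{1}$ and $\alpha(v_{1}\otimes\cdots\otimes v_{k})=(-1)^{k}v_{k}\otimes\cdots\otimes v_{1}$. It is standard that $\alpha$ is a continuous algebra anti-automorphism of $\tilde{T}^{(n)}(V)$, so in particular $\alpha(a^{\otimes j})=\alpha(a)^{\otimes j}$ for every $a$. A short induction on the Lie degree, based on $\alpha|_{V}=-\mathrm{id}$ and the anti-automorphism identity $\alpha([u,v])=\alpha(v)\otimes\alpha(u)-\alpha(u)\otimes\alpha(v)$, gives $\alpha(l)=-l$ for every $l\in\mathcal{L}_{a}^{n}(V)$. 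Consequently
\[
\alpha(\exp l)=\sum_{j=0}^{n}\frac{\alpha(l)^{\otimes j}}{j!}=\exp(-l)=(\exp l)^{-1},
\]
where the last equality holds because the powers of $l$ commute, so $\exp(l)\otimes\exp(-l)=\mathbf{1}$.

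The key point is then that $\alpha$ is an isometry of $\tilde{T}^{(n)}(V)$ for $\|\cdot\|_{\tilde{T}^{(n)}(V)}$: since $\alpha$ restricted to $V^{\otimes k}$ is $(-1)^{k}$ times the permutation that reverses the order of the $k$ tensor factors, the symmetry assumption on $\|\cdot\|_{V^{\otimes k}}$ together with the absolute homogeneity of the norm gives $\|\pi_{k}(\alpha g)\|_{V^{\otimes k}}=\|\pi_{k}(g)\|_{V^{\otimes k}}$ for each $k$; taking $k$-th roots and the maximum over $k$ yields $\|\alpha g\|_{\tilde{T}^{(n)}(V)}=\|g\|_{\tilde{T}^{(n)}(V)}$. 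Combining this with the previous paragraph, $\|g^{-1}\|=\|\alpha g\|=\|g\|$ for $g\in\exp\mathcal{L}_{a}^{n}(V)$, and hence for every $g\in G^{(n)}$ by continuity and density.

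The main obstacle is making the isometry statement in the third paragraph fully rigorous: the symmetry hypothesis is written only for pure tensors $v_{1}\otimes\cdots\otimes v_{k}$, whereas the reversal permutation must be applied to the generally non-elementary tensor $\pi_{k}(g)$. I would resolve this by reading the symmetry hypothesis in its natural strong form, namely that each $\|\cdot\|_{V^{\otimes k}}$ is invariant under the full action of the symmetric group permuting tensor factors. Attempting instead to track the reversal directly through the ``Lie monomials'' $l_{i_{1}}\otimes\cdots\otimes l_{i_{r}}$ appearing in $\pi_{k}(\exp l)$ does not obviously close, since reversing the letters also reverses the order of the blocks, so some form of genuine permutation-invariance of $\|\cdot\|_{V^{\otimes k}}$ seems unavoidable. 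All the remaining ingredients — the anti-automorphism property of $\alpha$, the identity $\alpha(l)=-l$, the continuity of inversion, and the density of $\exp\mathcal{L}_{a}^{n}(V)$ in $G^{(n)}$ — are routine.
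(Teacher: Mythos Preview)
Your argument is correct and follows the same core idea as the paper: reduce by continuity of inversion to $g\in\exp\mathcal{L}_a^{n}(V)$, identify $g^{-1}$ with the antipode $\alpha(g)$, and use that $\alpha$ preserves each $\|\cdot\|_{V^{\otimes k}}$. The one substantive difference is in how the identity $g^{-1}=\alpha(g)$ is obtained: the paper first invokes the finite-dimensional projection Lemma~\ref{finite dimensional projection} to place $g$ in $\exp\mathcal{L}_a^{n}(V')$ for some finite-dimensional $V'\subset V$, and then cites Theorem~3.3.3 of \cite{MR2036784}; you instead prove it directly from the anti-automorphism property of $\alpha$ together with $\alpha|_{\mathcal{L}_a^{n}(V)}=-\mathrm{id}$. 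Your route is more self-contained and avoids both the projection lemma and the external reference; the paper's route has the virtue of illustrating the finite-dimensional reduction technique that is the theme of the note. Your caveat about needing the permutation-invariance of $\|\cdot\|_{V^{\otimes k}}$ on all tensors (not just elementary ones) is well taken and applies equally to the paper's proof, which makes the same implicit strong reading of assumption~2.
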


\begin{lem}
\label{key Lemma}Let $N\in\mathbb{N}$. There exists a map $\mathcal{J}:WG\Omega_{p}\left(V\right)\rightarrow WG\Omega_{p}\left(\bigoplus_{i=0}^{N}V^{\otimes i}\right)$
such that for all $x\in WG\Omega_{p}\left(V\right)$:

1. $\pi_{1}\left(\mathcal{J}\left(x\right)\right)=S_{N}\left(x\right)_{0,\cdot}$;

2. If $x,y\in WG\Omega_{p}\left(V\right)$ is such that $S\left(x\right)=S\left(y\right)$,
then $S\left(\mathcal{J}\left(x\right)\right)=S\left(\mathcal{J}\left(y\right)\right)$.
\end{lem}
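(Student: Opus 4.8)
The plan is to construct $\mathcal{J}(x)$ explicitly as the canonical lift of the level-$N$ signature path $t \mapsto S_N(x)_{0,t}$, viewed as a path in the Banach space $W := \bigoplus_{i=0}^{N} V^{\otimes i}$, and then show it has the two required properties. First I would observe that $W$ is a Banach space once we fix admissible norms on its summands (using the norms $\|\cdot\|_{V^{\otimes_a k}}$ already assumed on the tensor powers of $V$), and that $t \mapsto S_N(x)_{0,t}$ is a path of finite $p$-variation taking values in $G^{(N)}$ by Definition \ref{signature definition}; in particular it lies in $\tilde T^{(1)}(W)$ after we identify its first level. The natural candidate is then $\mathcal{J}(x) := \Phi\bigl(S_N(x)_{0,\cdot}\bigr)$, where $\Phi$ sends a $\tilde T^{(1)}(W)$-valued path of finite $p$-variation to its Lyons lift in $WG\Omega_p(W)$ — but since $N$ may be smaller than $\lfloor p\rfloor$, one must instead treat $t\mapsto S_N(x)_{0,t}$ directly as a $p$-rough path over $W$ whose iterated integrals in $W$ are well-defined; I would use that $S_N(x)$ itself is (the truncation of) a multiplicative functional in $\tilde T((V))$, so its increments, regarded as elements of $W = \bigoplus_{i=0}^N V^{\otimes i}$, assemble into a genuine geometric $p$-rough path over $W$. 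Property 1 is then immediate by construction: $\pi_1(\mathcal{J}(x)) = S_N(x)_{0,\cdot}$.

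The heart of the matter is property 2. The idea is that the signature $S(\mathcal{J}(x))_{0,T} \in \tilde T((W))$ is, levelwise, a fixed universal (exponential/shuffle-type) algebraic function of the iterated integrals of the $W$-valued path $S_N(x)_{0,\cdot}$, and these iterated integrals are in turn universal algebraic functions of the tensor coefficients of $S(x)_{0,T}$ — i.e. of the full signature of $x$. In finite dimensions this is a known identity: the signature of the lift of a truncated signature path is a fixed polynomial expression in the signature of the original path (this is the content that \cite{Cass infinite dimensional rp} exploits, and it is essentially the functoriality of the signature under the map "take the level-$N$ signature"). So the strategy is: (i) verify the identity $\pi_k\bigl(S(\mathcal{J}(x))_{0,T}\bigr) = F_k\bigl(S(x)_{0,T}\bigr)$ for some fixed continuous/algebraic maps $F_k$ when $V$ is finite dimensional, using the classical bounded-variation theory where $S$ is continuous in $p$-variation and every weakly geometric rough path is a genuine geometric rough path; (ii) use Lemma \ref{finite dimensional projection} to transfer this identity to arbitrary Banach $V$, by applying finite-dimensional projections on $V$, checking that both $\mathcal{J}$ and $S$ commute appropriately with the induced projections on $W$ and on the tensor algebras, and passing to the limit.

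Granting the identity $\pi_k\bigl(S(\mathcal{J}(x))_{0,T}\bigr) = F_k\bigl(S(x)_{0,T}\bigr)$, property 2 follows at once: if $S(x) = S(y)$ then $F_k(S(x)) = F_k(S(y))$ for every $k$, hence $S(\mathcal{J}(x)) = S(\mathcal{J}(y))$. The main obstacle I anticipate is step (ii): one must set up the finite-dimensional approximation carefully, because the tensor norms on $V^{\otimes k}$ are only assumed to satisfy the admissibility and symmetry conditions \eqref{eq:admissible norm}, not to be projective or injective cross-norms, so the projections $V \to V_\alpha$ onto finite-dimensional subspaces need not induce norm-one maps on all tensor powers; the argument must be arranged so that only the \emph{algebraic} identities are transferred (where continuity of $S$ in each fixed level suffices) rather than any quantitative estimate. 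A secondary technical point is checking that $\mathcal{J}(x)$ genuinely lands in $WG\Omega_p\bigl(\bigoplus_{i=0}^N V^{\otimes i}\bigr)$, i.e. that the constructed object takes values in the appropriate free nilpotent group $G^{(\lfloor p\rfloor)}(W)$ and has finite $p$-variation — the finite $p$-variation is inherited from that of $S_N(x)$ together with the multiplicativity estimates of Lyons' extension theorem, and the group membership again reduces, via Lemma \ref{finite dimensional projection}, to the finite-dimensional case where it is classical.
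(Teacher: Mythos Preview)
Your proposal is essentially correct and follows the same strategy as the paper: define $\mathcal{J}(x)$ so that its increments are given by an explicit shuffle-type formula in the signature of $x$, check finite $p$-variation directly, and verify the multiplicative and group-like properties by reducing to finite dimensions via Lemma~\ref{finite dimensional projection} (together with Chow--Rashevskii to realise group elements as signatures of bounded variation paths), then pass to the limit by continuity. Property~2 then drops out because $S(\mathcal{J}(x))_{0,T}$ is, by construction, a fixed continuous function of $S(x)_{0,T}$.

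The one place where the paper is sharper than your outline is the actual \emph{definition} of $\mathcal{J}(x)$. You describe it as ``the canonical lift of $t\mapsto S_N(x)_{0,t}$'' and note that since $N$ may be below $\lfloor p\rfloor$ one must instead build the $W$-valued rough path by hand from the multiplicativity of $S_N(x)$ in $\tilde T((V))$ --- but you stop short of saying what the higher levels are. The paper writes this down explicitly: for $X,Y\in T^{(nN)}(V)$ it sets
\[
H_{i_1,\ldots,i_n}(X,Y)=\sum_{j_1=1}^{i_1}\cdots\sum_{j_n=1}^{i_n} F_{j_1,\ldots,j_n}\bigl(X^{i_1-j_1},\ldots,X^{i_n-j_n}\bigr)\Bigl[\sum_{\pi\in OS(j_1,\ldots,j_n)}\pi\bigl(Y^{j_1+\cdots+j_n}\bigr)\Bigr],
\]
and then declares the $(i_1,\ldots,i_n)$-component of $S(\mathcal{J}(x))_{s,t}$ to be $H_{i_1,\ldots,i_n}\bigl(S_{nN}(x)_{0,s},S_{nN}(x)_{s,t}\bigr)$. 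This formula is what makes the finite-dimensional reduction clean: one approximates the three group elements $S_{nN}(x)_{0,s}$, $S_{nN}(x)_{s,u}$, $S_{nN}(x)_{u,t}$ by elements of $\exp\mathcal{L}_a^{nN}(V_r)$ for finite-dimensional $V_r$, realises those as signatures of bounded variation paths, verifies multiplicativity and the group-like property for $H$ there (this is classical), and lets $r\to\infty$ using only continuity of $H$. Your worry about tensor norms and projections is therefore largely sidestepped: no projections $V\to V_r$ are needed, only inclusions $V_r\hookrightarrow V$, and the argument is purely algebraic plus continuity of $H$ in each fixed level.
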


\begin{lem}
Let $W$ be a Banach space and $\Phi:W\rightarrow\mathbb{R}^{d}$
be a continuous linear functional on $W$. Then there exists a map
$\mathbf{F}:WG\Omega_{p}\left(W\right)\rightarrow WG\Omega_{p}\left(\mathbb{R}^{d}\right)$
such that for all $x\in WG\Omega_{p}\left(W\right)$:

1. $\pi_{1}\left(\mathbf{F}\left(x\right)\right)=\Phi\left(\pi_{1}\left(x\right)\right);$

2. If $x,y\in WG\Omega_{p}\left(W\right)$ is such that $S\left(x\right)=S\left(y\right)$,
then $S\left(\mathbf{F}\left(x\right)\right)=S\left(\mathbf{F}\left(y\right)\right)$. 
\end{lem}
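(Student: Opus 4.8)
The plan is to take $\mathbf{F}$ to be the pushforward of rough paths under the tensor-algebra homomorphism induced by $\Phi$. Since $\Phi$ is continuous and linear, for each $k$ the tensor power $\Phi^{\otimes k}$ extends to a bounded linear map $\Phi^{\otimes k}:W^{\otimes k}\to(\mathbb{R}^{d})^{\otimes k}$ with $\|\Phi^{\otimes k}\|\leq\|\Phi\|^{k}$; this uses the submultiplicativity (\ref{eq:admissible norm}) and symmetry of the admissible norms together with the finite dimensionality of $(\mathbb{R}^{d})^{\otimes k}$. Assembling these maps gives a continuous, grading-preserving algebra homomorphism $\hat{\Phi}:\tilde{T}((W))\to\tilde{T}((\mathbb{R}^{d}))$ fixing $\mathbf{1}$, with truncations $\hat{\Phi}^{(n)}:\tilde{T}^{(n)}(W)\to\tilde{T}^{(n)}(\mathbb{R}^{d})$. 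I then set $\mathbf{F}(x)_{t}:=\hat{\Phi}^{(\lfloor p\rfloor)}(x_{t})$ for $x\in WG\Omega_{p}(W)$.

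First I would verify that $\mathbf{F}(x)$ lies in $WG\Omega_{p}(\mathbb{R}^{d})$. As $\hat{\Phi}$ is a grading-preserving homomorphism fixing $\mathbf{1}$, it intertwines the truncated exponential and carries Lie brackets to Lie brackets, so $\hat{\Phi}(\mathcal{L}_{a}^{n}(W))\subseteq\mathcal{L}_{a}^{n}(\mathbb{R}^{d})$ and hence $\hat{\Phi}^{(\lfloor p\rfloor)}$ maps $\exp\mathcal{L}_{a}^{\lfloor p\rfloor}(W)$ into $\exp\mathcal{L}_{a}^{\lfloor p\rfloor}(\mathbb{R}^{d})$; by continuity this passes to closures, so $\hat{\Phi}^{(\lfloor p\rfloor)}$ sends $G^{(\lfloor p\rfloor)}$ (over $W$) into $G^{(\lfloor p\rfloor)}$ (over $\mathbb{R}^{d}$). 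For finite $p$-variation, multiplicativity gives $\mathbf{F}(x)_{s}^{-1}\mathbf{F}(x)_{t}=\hat{\Phi}^{(\lfloor p\rfloor)}(x_{s}^{-1}x_{t})$, while the definition (\ref{eq:norm for tensors}) of the norm together with $\|\Phi^{\otimes k}\|\leq\|\Phi\|^{k}$ gives $\|\hat{\Phi}^{(n)}(u)\|_{\tilde{T}^{(n)}(\mathbb{R}^{d})}\leq\|\Phi\|\,\|u\|_{\tilde{T}^{(n)}(W)}$; hence the $p$-variation of $\mathbf{F}(x)$ is at most $\|\Phi\|$ times that of $x$. Property $1$ is then immediate, since $\pi_{1}(\mathbf{F}(x)_{t})=\Phi(\pi_{1}(x_{t}))$ by construction.

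The core of the argument is the identity $S(\mathbf{F}(x))_{0,T}=\hat{\Phi}(S(x)_{0,T})$, from which property $2$ follows at once because $\hat{\Phi}$ depends only on $\Phi$. To prove it I would fix $n\geq\lfloor p\rfloor$ and check that the path $t\mapsto\hat{\Phi}^{(n)}(S_{n}(x)_{0,t})$ satisfies the three defining conditions of $S_{n}(\mathbf{F}(x))_{0,\cdot}$ in Definition \ref{signature definition}: it starts at $\mathbf{1}$; it has finite $p$-variation by the estimate above applied to the multiplicative path $S_{n}(x)_{0,\cdot}$; and, since $\hat{\Phi}$ is grading-preserving and multiplicative, its level-$\lfloor p\rfloor$ truncation is $\hat{\Phi}^{(\lfloor p\rfloor)}(\pi^{(\lfloor p\rfloor)}(S_{n}(x)_{0,t}))=\hat{\Phi}^{(\lfloor p\rfloor)}(x_{0}^{-1}x_{t})=\mathbf{F}(x)_{0}^{-1}\mathbf{F}(x)_{t}$. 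By the uniqueness clause of Definition \ref{signature definition}, $S_{n}(\mathbf{F}(x))_{0,\cdot}=\hat{\Phi}^{(n)}(S_{n}(x)_{0,\cdot})$ for all $n$, and so $S(\mathbf{F}(x))_{0,T}=\hat{\Phi}(S(x)_{0,T})$. Hence $S(x)=S(y)$ forces $S(\mathbf{F}(x))=\hat{\Phi}(S(x))=\hat{\Phi}(S(y))=S(\mathbf{F}(y))$.

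The step I expect to be the main obstacle is the very first one: checking that $\Phi$ genuinely induces continuous tensor-power maps on the completed, infinite-dimensional spaces $W^{\otimes k}$ with the stated operator norm bound, and consequently that $\mathbf{F}(x)$ takes values in $G^{(\lfloor p\rfloor)}$. The finite-dimensional version of this functoriality is classical, so if a direct verification from the properties of the admissible norms proves awkward one can reduce to finite dimensions via a projection argument in the spirit of Lemma \ref{finite dimensional projection}. Once the pushforward is known to be a well-defined weakly geometric rough path, the remaining claims---in particular property $2$---follow routinely from the uniqueness characterisation of the signature in Definition \ref{signature definition}.
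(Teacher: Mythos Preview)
Your proposal is correct and follows essentially the same approach as the paper: both define $\mathbf{F}$ as the pushforward by the tensor-algebra homomorphism $\hat{\Phi}$ induced by $\Phi$, check that this preserves the group $G^{(\lfloor p\rfloor)}$ and finite $p$-variation, and then deduce $S_{N}(\mathbf{F}(x))=\hat{\Phi}(S_{N}(x))$ to obtain property~2. Your write-up is in fact more careful than the paper's --- you invoke the uniqueness clause of Definition~\ref{signature definition} explicitly to identify $\hat{\Phi}^{(n)}(S_{n}(x))$ with $S_{n}(\mathbf{F}(x))$, and you rightly flag the boundedness of $\Phi^{\otimes k}$ on the completed tensor powers as the point requiring justification, which the paper simply asserts.
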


Let $x:\left[0,T\right]\rightarrow G^{\left(\lfloor p\rfloor\right)}$
be a weakly geometric rough path. Define $\overleftarrow{x}:\left[0,T\right]\rightarrow G^{\left(\lfloor p\rfloor\right)}$
by 
\[
\overleftarrow{x_{t}}=x_{T-t}.
\]

\begin{lem}
\label{reversal}Let $x$ be a weakly geometric rough path. Then $S\left(\overleftarrow{x}\right)_{0,T}=S\left(x\right)_{0,T}^{-1}$. 
\end{lem}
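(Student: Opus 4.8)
The plan is to work at each truncation level $n\geq\lfloor p\rfloor$, write down an explicit candidate for the truncated signature path $S_{n}\left(\overleftarrow{x}\right)_{0,\cdot}$, and then appeal to the uniqueness clause of Definition \ref{signature definition}. Fix such an $n$, put $y_{t}:=S_{n}\left(x\right)_{0,t}$ and $y_{s,t}:=y_{s}^{-1}\otimes y_{t}$, so that $y_{t,s}=y_{s,t}^{-1}$ and $\pi^{\left(\lfloor p\rfloor\right)}\left(y_{s,t}\right)=x_{s}^{-1}x_{t}$; here I use that $y_{t}\in G^{\left(n\right)}$ for all $t$ (part of Lyons' extension theorem quoted in Definition \ref{signature definition}), whence $y_{s,t}\in G^{\left(n\right)}$ as well. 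As a preliminary I would check that $\overleftarrow{x}$ is itself a $p$-weakly geometric rough path: it takes values in $G^{\left(\lfloor p\rfloor\right)}$, and by Lemma \ref{symmetric norm} applied to $\overleftarrow{x}_{t_{j}}^{-1}\overleftarrow{x}_{t_{j+1}}=x_{T-t_{j}}^{-1}x_{T-t_{j+1}}\in G^{\left(\lfloor p\rfloor\right)}$ one has $\|\overleftarrow{x}_{t_{j}}^{-1}\overleftarrow{x}_{t_{j+1}}\|_{\tilde{T}^{\left(\lfloor p\rfloor\right)}\left(V\right)}=\|x_{T-t_{j+1}}^{-1}x_{T-t_{j}}\|_{\tilde{T}^{\left(\lfloor p\rfloor\right)}\left(V\right)}$, so any partition sum for $\overleftarrow{x}$ along $\left\{ 0=t_{0}<\cdots<t_{m}=T\right\}$ equals the partition sum for $x$ along the reversed partition $\left\{ T-t_{m}<\cdots<T-t_{0}\right\}$ and is therefore finite.

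Next I would set $\Psi_{t}:=y_{T}^{-1}\otimes y_{T-t}=y_{T,T-t}\in\tilde{T}^{\left(n\right)}\left(V\right)$ for $t\in\left[0,T\right]$ and verify that $\Psi$ has the three properties characterising $S_{n}\left(\overleftarrow{x}\right)_{0,\cdot}$: (a) $\Psi_{0}=y_{T}^{-1}\otimes y_{T}=\mathbf{1}$; (b) $\Psi$ has finite $p$-variation; (c) $\pi^{\left(\lfloor p\rfloor\right)}\left(\Psi_{t}\right)=\overleftarrow{x}_{0}^{-1}\overleftarrow{x}_{t}$. Property (c) is immediate because $\pi^{\left(\lfloor p\rfloor\right)}$ is a group homomorphism, so $\pi^{\left(\lfloor p\rfloor\right)}\left(\Psi_{t}\right)=\left(x_{0}^{-1}x_{T}\right)^{-1}\otimes\left(x_{0}^{-1}x_{T-t}\right)=x_{T}^{-1}x_{T-t}=\overleftarrow{x}_{0}^{-1}\overleftarrow{x}_{t}$. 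For (b), a direct computation gives $\Psi_{t_{j}}^{-1}\otimes\Psi_{t_{j+1}}=y_{T-t_{j}}^{-1}\otimes y_{T}\otimes y_{T}^{-1}\otimes y_{T-t_{j+1}}=y_{T-t_{j},T-t_{j+1}}$, and Lemma \ref{symmetric norm} applied to this element of $G^{\left(n\right)}$ yields $\|\Psi_{t_{j}}^{-1}\otimes\Psi_{t_{j+1}}\|_{\tilde{T}^{\left(n\right)}\left(V\right)}=\|y_{T-t_{j+1},T-t_{j}}\|_{\tilde{T}^{\left(n\right)}\left(V\right)}$, so the $p$-variation sum for $\Psi$ along $\left\{ t_{j}\right\}$ equals the $p$-variation sum for $S_{n}\left(x\right)_{0,\cdot}$ along the reversed partition, which is finite since $S_{n}\left(x\right)_{0,\cdot}$ has finite $p$-variation. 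By the uniqueness clause of Definition \ref{signature definition}, applied to the $p$-weakly geometric rough path $\overleftarrow{x}$, this forces $\Psi=S_{n}\left(\overleftarrow{x}\right)_{0,\cdot}$, and evaluating at $t=T$ gives $S_{n}\left(\overleftarrow{x}\right)_{0,T}=y_{T}^{-1}=\left(S_{n}\left(x\right)_{0,T}\right)^{-1}$.

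Finally, since $\pi^{\left(n\right)}:\tilde{T}\left(\left(V\right)\right)\to\tilde{T}^{\left(n\right)}\left(V\right)$ is a group homomorphism, for every $n$ we obtain $\pi^{\left(n\right)}\left(S\left(\overleftarrow{x}\right)_{0,T}\right)=S_{n}\left(\overleftarrow{x}\right)_{0,T}=\left(S_{n}\left(x\right)_{0,T}\right)^{-1}=\left(\pi^{\left(n\right)}S\left(x\right)_{0,T}\right)^{-1}=\pi^{\left(n\right)}\left(S\left(x\right)_{0,T}^{-1}\right)$, and since the projections $\pi^{\left(n\right)}$ separate points of $\tilde{T}\left(\left(V\right)\right)$ this gives $S\left(\overleftarrow{x}\right)_{0,T}=S\left(x\right)_{0,T}^{-1}$. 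The only step that is not pure bookkeeping in the truncated tensor group is (b), and that is exactly where Lemma \ref{symmetric norm} is indispensable: the $p$-variation here is phrased through increments $g_{t_{j}}^{-1}g_{t_{j+1}}$, which is a priori not invariant under reversing the order of a partition, and the symmetry $\|g\|_{\tilde{T}^{\left(n\right)}\left(V\right)}=\|g^{-1}\|_{\tilde{T}^{\left(n\right)}\left(V\right)}$ on $G^{\left(n\right)}$ is precisely what lets both $\overleftarrow{x}$ and the candidate $\Psi$ inherit finite $p$-variation. I would expect the main (mild) technical point to be making this reversed-partition bookkeeping airtight and, if needed, spelling out why $S_{n}\left(x\right)_{0,\cdot}$ is valued in $G^{\left(n\right)}$.
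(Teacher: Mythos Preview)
Your proof is correct and takes a genuinely different route from the paper's. The paper proves
\[
S_{n}\left(x\right)_{s,t}\otimes S_{n}\left(\overleftarrow{x}\right)_{T-t,T-s}=\mathbf{1}
\]
by induction on $n$, going back to the explicit construction (from Lyons' extension theorem) of the $(n+1)$-th level as a limit of Riemann-type tensor products over partitions; it then computes this product symmetrically from both sides, shows its only nonzero component is at level $n+1$, and bounds that component using Lemma~\ref{symmetric norm} to see it vanishes as the mesh goes to $0$. Your argument bypasses the limit construction entirely: you write down the obvious candidate $\Psi_{t}=S_{n}\left(x\right)_{0,T}^{-1}\otimes S_{n}\left(x\right)_{0,T-t}$ and invoke the \emph{uniqueness} clause of Definition~\ref{signature definition} after verifying the three characterising properties. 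Lemma~\ref{symmetric norm} enters in both proofs, but in different places: for you it delivers finite $p$-variation of $\overleftarrow{x}$ and of $\Psi$, for the paper it gives the estimate that kills the error term. Your approach is shorter and more conceptual; the paper's is more self-contained in that it only uses the existence/construction half of the extension theorem rather than the uniqueness half. The one point you flag---that $S_{n}\left(x\right)_{0,t}$ is $G^{\left(n\right)}$-valued so Lemma~\ref{symmetric norm} applies to the increments---is handled in the paper by citing Corollary~3.9 of \cite{Cass infinite dimensional rp} (used in the proof of Lemma~\ref{key Lemma}), so that gap is easily filled.
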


\begin{lem}
Let $x\in WG\Omega_{p}\left(V\right)$. Let $\sigma$ be a continuous
non-decreasing function. Then $S\left(x\circ\sigma\right)=S\left(x\right)$. 
\end{lem}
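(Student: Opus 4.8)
The plan is to derive this directly from the uniqueness characterisation in Definition \ref{signature definition}, with no finite-dimensional reduction needed. I will take $\sigma$ to be a continuous non-decreasing surjection $[0,T']\to[0,T]$, so that $x\circ\sigma$ is a genuine reparametrisation of $x$ with $\sigma(0)=0$ and $\sigma(T')=T$ (the statement is vacuous otherwise). The guiding observation is that a non-decreasing change of parameter never increases $p$-variation, so all the objects involved stay inside the classes where Definition \ref{signature definition} applies, and one only has to exhibit the obvious candidate for the reparametrised signature and appeal to uniqueness.

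First I would check that $x\circ\sigma$ is again a $p$-weakly geometric rough path, so that $S(x\circ\sigma)$ makes sense. Given a partition $\{t_j\}$ of $[0,T']$, the images $\{\sigma(t_j)\}$ form a weakly increasing sequence in $[0,T]$; consecutive pairs on which $\sigma$ is constant contribute $\|\mathbf{1}\|_{\tilde{T}^{(\lfloor p\rfloor)}(V)}=0$ by (\ref{eq:norm for tensors}), and the remaining pairs form an honest partition of $[0,T]$. Hence the $p$-variation of $x\circ\sigma$ is dominated by that of $x$, and since $x\circ\sigma$ also takes values in $G^{(\lfloor p\rfloor)}$, it lies in $WG\Omega_{p}(V)$.

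Next, for fixed $n\ge\lfloor p\rfloor$ I would propose $y_{t}:=S_{n}(x)_{0,\sigma(t)}$ as the candidate for $S_{n}(x\circ\sigma)_{0,\cdot}$ and verify the three defining properties: $y_{0}=S_{n}(x)_{0,0}=\mathbf{1}$; the increments are $y_{t_j}^{-1}y_{t_{j+1}}=S_{n}(x)_{0,\sigma(t_j)}^{-1}S_{n}(x)_{0,\sigma(t_{j+1})}$, so the same non-decreasing-reparametrisation estimate applied to $S_{n}(x)_{0,\cdot}$ (which has finite $p$-variation by Definition \ref{signature definition}) shows $y$ has finite $p$-variation; and $\pi^{(\lfloor p\rfloor)}(y_{t})=x_{0}^{-1}x_{\sigma(t)}=(x\circ\sigma)_{0}^{-1}(x\circ\sigma)_{t}$ by the defining property of $S_{n}(x)$. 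Uniqueness in Definition \ref{signature definition} then gives $S_{n}(x\circ\sigma)_{0,\cdot}=S_{n}(x)_{0,\sigma(\cdot)}$; evaluating at $t=T'$ yields $S_{n}(x\circ\sigma)_{0,T'}=S_{n}(x)_{0,T}$ for every $n\ge\lfloor p\rfloor$, and since the maps $\pi^{(n)}$ determine the full signature, $S(x\circ\sigma)=S(x)$.

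I do not expect a genuine obstacle here; this is the most elementary of the lemmas in the note, since it needs only the uniqueness in the definition of the signature and not any algebraic identity that must be transported from finite dimensions. The single point requiring care is the treatment of parameter intervals on which $\sigma$ is constant, and this is settled once one records that $\|\mathbf{1}\|_{\tilde{T}^{(n)}(V)}=0$, so such intervals drop out of every $p$-variation sum.
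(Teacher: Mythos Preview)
Your proof is correct and takes a genuinely different route from the paper. The paper proves $S_n(x\circ\sigma)_{s,t}=S_n(x)_{\sigma(s),\sigma(t)}$ by induction on the tensor level $n$, using the explicit construction of the level-$(n+1)$ extension as a limit of Riemann-type sums over partitions (the formula from Lyons' original paper); to pass from a partition of $[s,t]$ to one of $[\sigma(s),\sigma(t)]$ it extracts a subsequence $(s_j)$ on which $\sigma$ strictly increases, notes that the terms with $\sigma(t_k)=\sigma(t_{k+1})$ vanish, and checks that the mesh of the image partition goes to zero by uniform continuity of $\sigma$. You bypass all of this by appealing directly to the uniqueness clause in Definition~\ref{signature definition}: once you observe that $t\mapsto S_n(x)_{0,\sigma(t)}$ starts at $\mathbf{1}$, has finite $p$-variation (since a non-decreasing change of parameter never increases $p$-variation), and projects to $(x\circ\sigma)_0^{-1}(x\circ\sigma)_t$ at level $\lfloor p\rfloor$, uniqueness forces it to be $S_n(x\circ\sigma)_{0,\cdot}$. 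Your argument is shorter and isolates exactly what is needed---no algebraic identity has to be transported from finite dimensions, as you correctly note. The paper's argument, by contrast, is self-contained with respect to the existence side of the extension theorem, working directly with the limit that defines $S_{n+1}$ rather than invoking the uniqueness it packages.
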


Let $\tilde{T}_{i.r.c.}\left(\left(V\right)\right)$ ($i.r.c.$ for
infinite radius of convergence) be the set of elements in $\tilde{T}\left(\left(V\right)\right)$
such that 
\[
\|\cdot\|:=\max_{k\in\mathbb{N}}\|\pi_{k}\left(\cdot\right)\|_{V^{\otimes k}}^{\frac{1}{k}}<\infty.
\]
By Lemma \ref{symmetric norm}, $\|a\|=\|a^{-1}\|$ and if $a,b\in\tilde{T}_{i.r.c.}\left(\left(V\right)\right)$,
then $a\otimes b\in\tilde{T}_{i.r.c.}\left(\left(V\right)\right)$
and $\left\Vert a+b\right\Vert \leq\left\Vert a\right\Vert +\left\Vert b\right\Vert $.
We define a metric $d$ on $\tilde{T}_{i.r.c.}\left(\left(V\right)\right)$
by 
\[
d\left(a,b\right)=\left\Vert a^{-1}\otimes b\right\Vert .
\]
Subsequently, when there is no confusion, we will use the shorthand
$ab$ to denote $a\otimes b$. 
\begin{lem}
Let $\left[x\left(n\right)\right]_{n=0}^{\infty}$ be a sequence in
$WG\Omega_{p}\left(V\right)$ such that $\sup_{n}\left\Vert x\left(n\right)\right\Vert _{p-var}<\infty$
and $\left[x\left(n\right)\right]_{n=0}^{\infty}$ converges uniformly
to $x$. Then $\left[x\left(n\right)\right]_{n=0}^{\infty}$ has a
subsequence $\left[x\left(n_{k}\right)\right]_{k=0}^{\infty}$ such
that $S\left(x\left(n_{k}\right)\right)\rightarrow S\left(x\right)$.
\end{lem}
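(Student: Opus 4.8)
The plan is to reduce the statement to two facts of the finite--dimensional theory whose proofs use nothing beyond the admissibility \eqref{eq:admissible norm} of the tensor norms and the symmetry supplied by Lemma \ref{symmetric norm}, so that they carry over to the Banach setting verbatim. I will in fact show that the whole sequence satisfies $S(x(k))_{0,T}\to S(x)_{0,T}$ for the metric $d$ on $\tilde{T}_{i.r.c.}((V))$; the subsequence in the statement is then immediate.

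For the preliminaries, set $M:=\sup_{k}\|x(k)\|_{p-var}<\infty$ and fix $p'$ with $p<p'<\lfloor p\rfloor+1$, so that $\lfloor p'\rfloor=\lfloor p\rfloor$; since finite $p$-variation forces finite $p'$-variation while the state space $G^{(\lfloor p\rfloor)}$ is unchanged, each $x(k)$ is also a $p'$-weakly geometric rough path. The limit $x$ lies in $WG\Omega_{p}(V)$: each $x_{t}$ lies in $G^{(\lfloor p\rfloor)}$ because that set is closed and $x(k)_{t}\to x_{t}$, and lower semicontinuity of $p$-variation under pointwise convergence gives $\|x\|_{p-var}\le M$, so $S(x)_{0,T}$ is defined by Definition \ref{signature definition}. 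Moreover the neo-classical estimate contained in Lyons' extension theorem (Definition \ref{signature definition}, \cite{MR1654527}), which holds over any Banach space with an admissible tensor norm, gives $\|\pi_{m}(S(y)_{0,T})\|\le\beta(C\|y\|_{p-var})^{m}/(m/p)!$ for every $y\in WG\Omega_{p}(V)$, with universal constants $\beta,C$; in particular $S(x)_{0,T}$ and all the $S(x(k))_{0,T}$ lie in $\tilde{T}_{i.r.c.}((V))$, so $d$ applies to them.

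The two transferred facts are: (i) the interpolation lemma, that a sequence in $WG\Omega_{p}(V)$ with uniformly bounded $p$-variation converging uniformly converges in the $p'$-variation metric $d_{p'-var}$; and (ii) the continuity of Lyons' extension map $y\mapsto S_{n}(y)_{0,T}$ from the $p'$-variation topology into $\tilde{T}^{(n)}(V)$, for each fixed $n\ge\lfloor p\rfloor$. By (i), $d_{p'-var}(x(k),x)\to0$; then by (ii), $S_{n}(x(k))_{0,T}\to S_{n}(x)_{0,T}$ in $\tilde{T}^{(n)}(V)$ for every $n$, and since inversion and multiplication are continuous on the truncated tensor algebras, $S_{n}(x(k))_{0,T}^{-1}\otimes S_{n}(x)_{0,T}\to\mathbf{1}$, i.e. $\pi_{m}\bigl(S(x(k))_{0,T}^{-1}\otimes S(x)_{0,T}\bigr)\to0$ in $V^{\otimes m}$ as $k\to\infty$, for each fixed $m\ge1$. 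To control the high-order terms uniformly in $k$: by Lemma \ref{reversal}, $S(x(k))_{0,T}^{-1}=S(\overleftarrow{x(k)})_{0,T}$, and $\|\overleftarrow{x(k)}\|_{p-var}=\|x(k)\|_{p-var}\le M$ by the symmetry of Lemma \ref{symmetric norm}; expanding the product with \eqref{eq:admissible norm} and applying the neo-classical bound to both factors gives $\sup_{k}\|\pi_{m}(S(x(k))_{0,T}^{-1}\otimes S(x)_{0,T})\|^{1/m}\le C'M\,((m/p)!)^{-1/m}\to0$ as $m\to\infty$. Splitting $d(S(x(k))_{0,T},S(x)_{0,T})=\sup_{m\ge1}\|\pi_{m}(S(x(k))_{0,T}^{-1}\otimes S(x)_{0,T})\|^{1/m}$ at a level beyond which this uniform tail drops below a prescribed $\varepsilon$, and using the pointwise-in-$m$ convergence above for the finitely many remaining levels, yields $d(S(x(k))_{0,T},S(x)_{0,T})\to0$.

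The step I expect to be the genuine obstacle --- and the reason this kind of statement needs the infinite-dimensional care this note supplies --- is (ii), the continuity of Lyons' extension map over a Banach space: one must revisit the construction of the extension to confirm that every estimate appeals only to \eqref{eq:admissible norm} and not to finite-dimensionality, or else deduce it from the classical finite-dimensional statement by a finite-dimensional reduction of the kind used elsewhere in this section. Fact (i) needs the same sort of re-reading but is more routine, and the remainder is bookkeeping with the factorial decay.
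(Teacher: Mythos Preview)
Your argument is correct and follows the paper's route: both establish the interpolation step (uniform convergence plus a uniform $p$-variation bound gives $d_{p'\text{-}var}$-convergence for $p'>p$ with $\lfloor p'\rfloor=\lfloor p\rfloor$, which the paper writes out explicitly) and then invoke the continuity of Lyons' extension map, the paper doing so by citing Theorems 2.2.2 and 3.3.3 of \cite{MR1654527} directly. Your flagged obstacle (ii) is not actually one, since \cite{MR1654527} is already set in Banach spaces with admissible tensor norms; your explicit factorial-decay tail argument via Lemma~\ref{reversal} simply unpacks what that continuity theorem supplies, and like the paper you in fact obtain convergence of the whole sequence rather than merely a subsequence.
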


\subsection*{Symmetric norm }

The following Lemma useful as finite dimensional projection type of
result and has first appeared in \cite{Cass infinite dimensional rp}. 
\begin{lem}
\label{finite dimensional projection}Let $V$ be a Banach space and
$M\in\mathbb{N}$. Let $X\in\exp\mathcal{L}_{a}^{M}\left(V\right)$.
Then there exists a finite dimensional subspace $V^{\prime}$ of $V$
such that $X\in\exp\mathcal{L}_{a}^{M}\left(V^{\prime}\right)$.\end{lem}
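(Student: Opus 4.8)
The plan is to unwind the inductive definitions of $L_{a}^{i}(V)$ and $\mathcal{L}_{a}^{M}(V)$ and to observe that a single element of $\exp\mathcal{L}_{a}^{M}(V)$ is manufactured out of only finitely many vectors of $V$, so that one may simply take $V'$ to be their span.

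First I would choose, using the hypothesis, a Lie element $l\in\mathcal{L}_{a}^{M}(V)$ with $X=\exp[l]$, and decompose $l=\sum_{i=1}^{M}l_{i}$ with $l_{i}\in L_{a}^{i}(V)$. From the defining recursion $L_{a}^{1}(V)=V$ and $L_{a}^{n+1}(V)=\mathrm{Span}\{[x,y]:x\in L_{a}^{n}(V),\ y\in V\}$, a routine induction on $i$ (using bilinearity of the bracket) shows that each $l_{i}$ is a finite linear combination of left-nested brackets $[\cdots[[u_{1},u_{2}],u_{3}],\ldots,u_{i}]$ with $u_{1},\ldots,u_{i}\in V$. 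Let $v_{1},\ldots,v_{r}$ be the finitely many vectors of $V$ that occur, across all $i$, in any of these brackets, and put $V'=\mathrm{span}\{v_{1},\ldots,v_{r}\}$, a finite-dimensional subspace of $V$.

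Next I would justify the identification $\tilde{T}^{(M)}(V')\subseteq\tilde{T}^{(M)}(V)$ under which the assertion is to be read. Since over $\mathbb{R}$ the subspace $V'$ admits a complement, the natural map $(V')^{\otimes_{a}k}\to V^{\otimes_{a}k}$ is injective for each $k$, and finite-dimensionality of $V'$ makes each $(V')^{\otimes_{a}k}=(V')^{\otimes k}$ a finite-dimensional (hence closed) subspace of $V^{\otimes k}$; this embedding is multiplicative, so it intertwines the Lie brackets and the truncated exponential maps of the two truncated tensor algebras. Consequently each left-nested bracket appearing above, now read inside $\tilde{T}^{(M)}(V')$, lies in $L_{a}^{i}(V')$; summing gives $l\in\mathcal{L}_{a}^{M}(V')$; and $\exp[l]$, computed inside $\tilde{T}^{(M)}(V')$, equals $X$. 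Hence $X\in\exp\mathcal{L}_{a}^{M}(V')$.

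The only point requiring care is the bookkeeping in the previous paragraph, namely that restricting to $V'$ changes neither the value of any iterated bracket nor that of the exponential; but this is immediate from the injectivity and multiplicativity of $(V')^{\otimes_{a}k}\hookrightarrow V^{\otimes_{a}k}$, and finite-dimensionality of $V'$ removes any completion issue. There is thus no genuine analytic obstacle: the entire content of the lemma is the elementary observation that one Lie polynomial involves only finitely many generators.
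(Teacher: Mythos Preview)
Your proof is correct and follows essentially the same route as the paper's: both arguments take the Lie polynomial $l$ with $X=\exp l$ (the paper obtains it as $\log X$), write it as a finite linear combination of nested brackets of elements of $V$, and set $V'$ equal to the span of those finitely many vectors. Your version is in fact a bit more careful than the paper's, since you explicitly justify the injectivity of $(V')^{\otimes_{a}k}\hookrightarrow V^{\otimes_{a}k}$ and the compatibility of brackets and exponentials under this embedding, points the paper leaves implicit.
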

\begin{proof}
Since $X\in\bigoplus_{k=0}^{M}V^{\otimes_{a}k}$, there exists a finite
set $J\subset\mathbb{N}$, $\big(X_{ji}^{k}\big)_{1\leq i\leq k,\; j\in J}$,
$X_{ji}^{k}\in V$ such that 
\[
X=\mathbf{1}+\sum_{k=1}^{M}\sum_{j\in J}X_{j1}^{k}\otimes\ldots\otimes X_{jk}^{k}.
\]
Define $\log:\tilde{T}^{\left(n\right)}\left(V\right)\rightarrow\tilde{T}^{\left(n\right)}\left(V\right)$
by 
\begin{equation}
\log\left(1+g\right)=\pi^{\left(n\right)}\left(\sum_{k=1}^{n}\left(-1\right)^{k+1}\frac{g^{k}}{k}\right).\label{eq:1}
\end{equation}
There exists a finite set $J^{\prime}\subset\mathbb{N}$, $\big(l_{ji}^{k}\big)_{1\leq i\leq k},$$l_{ji}^{k}\in V$,
such that 
\[
\log X=\sum_{k=1}^{M}\sum_{j\in J^{\prime}}\left[l_{j1}^{k},\left[\ldots\left[l_{j\left(k-1\right)}^{k},l_{jk}^{k}\right]\right]\right],
\]
where $\left[c,d\right]=c\otimes d-d\otimes c$. Let 
\[
V^{\prime}:=\mbox{Span}\left\{ l_{ji}^{k}:1\leq i\leq k,\;1\leq k\leq M,j\in J^{\prime}\right\} .
\]
In particular, $X$ lies in $\tilde{T}^{\left(M\right)}\left(V^{\prime}\right)$
and $\log X\in\mathcal{\mathcal{L}}_{a}^{M}\left(V^{\prime}\right)$. 
\end{proof}

\begin{proof}[Proof of Lemma 1.1]As the map $g\rightarrow g^{-1}$
is continuous with respect to the $\left\Vert \cdot\right\Vert _{\tilde{T}^{\left(n\right)}\left(V\right)}$,
it suffices to prove the Lemma for elements in $\exp\mathcal{L}_{a}^{n}\left(V\right)$.
Let$g\in\exp\mathcal{L}_{a}^{n}\left(V\right)$. By Lemma \ref{finite dimensional projection},
there exists a finite dimensional subspace $V^{\prime}$ of $V$ such
that $g\in\exp\mathcal{L}_{a}^{n}\left(V^{\prime}\right)$. Define
the antipode operator $\alpha:G^{\left(*\right)}\rightarrow G^{\left(*\right)}$
(see also p13, \cite{CL14}) so that 
\[
\pi_{k}\left(\alpha\left(v_{1}\otimes\ldots\otimes v_{k}\right)\right)=\left(-1\right)^{k}v_{k}\otimes\ldots\otimes v_{1}.
\]
It follows from Theorem 3.3.3 in \cite{MR2036784} that 
\begin{equation}
\pi_{n}\left(g^{-1}\right)=\pi_{n}\left(\alpha\left(g\right)\right).\label{eq:level wise symmetric norm}
\end{equation}
Therefore, in particular, we have that $\left\Vert \cdot\right\Vert _{\tilde{T}^{\left(n\right)}\left(V\right)}$
is symmetric. \end{proof}

\subsection*{Transformation on rough paths}

\subsubsection*{Signature of signature }

A key property of iterated integrals not just for our purpose but
also for rough path theory in general, is that the iterated integrals
of iterated integrals can be rewritten as a linear combination of
iterated integrals. We will now make this precise in the infinite
dimensional setting. 

Fix $N\in\mathbb{N},$ define 
\[
W=\bigoplus_{i=1}^{N}V^{\otimes i}.
\]
It follows that 
\[
W^{\otimes_{a}n}=\bigoplus_{i_{1},\cdots,i_{n}=1}^{N}V^{\otimes i_{1}}\otimes_{a}\cdots\otimes_{a}V^{\otimes i_{n}}\subset\bigoplus_{i_{1},\cdots,i_{n}=1}^{N}V^{\otimes(i_{1}+\cdots+i_{n})}.
\]
We define the tensor norm $\|\cdot\|_{W^{\otimes n}}$ on $W^{\otimes_{a}n}$
by 
\[
\|\xi\|_{W^{\otimes n}}=\sum_{i_{1},\cdots,i_{n}=1}^{N}\|\xi^{i_{1},\cdots,i_{n}}\|_{V^{\otimes(i_{1}+\cdots+i_{n})}},
\]
where $\xi=(\xi^{i_{1},\cdots,i_{n}})_{1\leq i_{1},\cdots,i_{n}\leq N}$.
Note that by the admissibility of $\left(\left\Vert \cdot\right\Vert _{V^{\otimes k}}:k\geq1\right)$,
\begin{eqnarray*}
 &  & \left(\sum_{1\leq i_{1},\ldots,i_{n}\leq N}\left\Vert \xi^{i_{1},\ldots,i_{n}}\right\Vert _{V^{\otimes\left(i_{1}+\ldots+i_{n}\right)}}\right)\left(\sum_{1\leq j_{1},\ldots,j_{k}\leq N}\left\Vert \xi^{j_{1},\ldots,j_{k}}\right\Vert _{V^{\otimes\left(j_{1}+\ldots+j_{k}\right)}}\right)\\
 & = & \sum_{1\leq i_{1},\ldots,i_{n}\leq N,\;1\leq j_{1},\ldots,j_{k}\leq N}\left\Vert \xi^{i_{1},\ldots,i_{n}}\right\Vert _{V^{\otimes\left(i_{1}+\ldots+i_{n}\right)}}\left\Vert \xi^{j_{1},\ldots,j_{k}}\right\Vert _{V^{\otimes\left(j_{1}+\ldots+j_{k}\right)}}\\
 & \geq & \sum_{1\leq i_{1},\ldots,i_{n+k}\leq N}\left\Vert \xi^{i_{1},\ldots,i_{n+k}}\right\Vert _{V^{\otimes\left(i_{1}+\ldots+i_{n+k}\right)}}.
\end{eqnarray*}
 By taking completion, $W^{\otimes n}$ coincides with $\bigoplus_{i_{1},\cdots,i_{n}=1}^{N}V^{\otimes(i_{1}+\cdots+i_{n})}.$
Let $\overline{\mathcal{L}_{a}^{n}\left(V\right)}$ be the closure
of $\mathcal{L}_{a}^{n}\left(V\right)$ in $\tilde{T}^{\left(n\right)}\left(V\right)$.
By admissibility (\ref{eq:admissible norm}) of the tensor norm $\left\Vert \cdot\right\Vert _{V^{\otimes m}}$,
the multiplication operator $\otimes:\left(V^{\otimes k},V^{\otimes k^{\prime}}\right)\rightarrow V^{\otimes\left(k+k^{\prime}\right)}$
is continuous. This implies that the functions $\exp$ and log are
both continuous. Hence 
\[
\overline{\exp\left(\mathcal{L}_{a}^{n}\left(V\right)\right)}=\exp\left(\overline{\mathcal{L}_{a}^{n}\left(V\right)}\right).
\]
 \begin{proof} [Proof of Lemma 1.2]

Define for $w\in V^{\otimes j}$ and $v\in V^{\otimes k}$, 
\[
F_{j}\left(v\right)\left(w\right)=v\otimes w
\]
and for $v_{1}\in V^{\otimes k_{1}},\ldots,v_{n}\in V^{\otimes k_{n}}$
and $w_{1}\in V^{\otimes j_{1}},\ldots,w_{n}\in V^{\otimes j_{n}}$,
\[
F_{j_{1},\ldots,j_{n}}\left(v_{1},\ldots,v_{n}\right)\left(w_{1}\otimes\ldots\otimes w_{n}\right)=F_{j_{1}}\left(v_{1}\right)\left(w_{1}\right)\otimes\ldots\otimes F_{j_{n}}\left(v_{n}\right)\left(w_{n}\right).
\]
Define for a permutation $\pi$ on $\left\{ 1,\ldots,n\right\} $,
the operator $\pi$ on $V^{\otimes n}$ by 
\[
\pi\left(v_{1}\otimes\ldots\otimes v_{n}\right)=v_{\pi\left(1\right)}\otimes\ldots\otimes v_{\pi\left(n\right)}.
\]
We define for $X,Y\in T^{\left(nN\right)}\left(V\right)$, 
\begin{equation}
H_{i_{1},\ldots,i_{n}}\left(X,Y\right)=\sum_{j_{n}=1}^{i_{n}}\ldots\sum_{j_{1}=1}^{i_{1}}F_{j_{1},\ldots,j_{n}}\left(X^{i_{1}-j_{1}},\ldots,X^{i_{n}-j_{n}}\right)\left[\sum_{\pi\in OS\left(j_{1},\ldots,j_{n}\right)}\pi\left(Y^{j_{1}+\ldots+j_{n}}\right)\right]\label{eq:define H}
\end{equation}
where for $Z\in T^{\left(nN\right)}\left(V\right)$, we use the notation
$Z^{k}$ to denote $\pi_{k}\left(Z\right)$ and $OS\left(j_{1},\ldots,j_{n}\right)$
to denote the set of ordered shuffles (see p72 \cite{lyons st flours}).
We define $\mathcal{J}\left(x\right)$ so that 
\[
S\left(\mathcal{J}\left(x\right)\right)_{s,t}=H_{i_{1},\ldots,i_{n}}\left(S_{nN}\left(x\right)_{0,s},S_{nN}\left(x\right)_{s,t}\right).
\]
As each $j_{1},\ldots,j_{N}$ in the sum in (\ref{eq:define H}) are
at least $1$, $S\left(\mathcal{J}\left(x\right)\right)$ has finite
$p$-variation for $x\in WG\Omega_{p}\left(V\right)$. It remains
to show that $S\left(\mathcal{J}\left(x\right)\right)$ is multiplicative
and takes value in the space of group-like elements. In the finite
dimension case, we have proved these properties of $S\left(\mathcal{J}\left(x\right)\right)$
as lemma 4.4 in \cite{our uniqueness paper }. We will now use Lemma
\ref{finite dimensional projection} to carry out a finite dimensional
approximation. Let $\left(s,u,t\right)\in\left[0,1\right]^{3}$ be
such that $s\leq u\leq t$. Let $1\leq n\leq\lfloor p\rfloor$.By
Corollary 3.9 in \cite{Cass infinite dimensional rp}, $\pi^{\left(nN\right)}\left(S\left(x\right)_{0,s}\right),\;\pi^{\left(nN\right)}\left(S\left(x\right)_{s,u}\right)$
and $\pi^{\left(nN\right)}\left(S\left(x\right)_{u,t}\right)$ all
lie in $G^{\left(nN\right)}$. Therefore, there exist sequences $x^{r},y^{r},z^{r}\in\exp\mathcal{L}_{a}^{nN}\left(V\right)$
such that $x^{r}\rightarrow\pi^{\left(nN\right)}\left(S\left(x\right)_{0,s}\right)$,
$y^{r}\rightarrow\pi^{\left(nN\right)}\left(S\left(x\right)_{s,u}\right)$
and $z^{r}\rightarrow\pi^{\left(nN\right)}\left(S\left(x\right)_{u,t}\right)$
as $r\rightarrow\infty$. By Lemma \ref{finite dimensional projection},
there exists finite dimensional spaces $V_{r}$ such that $x_{r},y_{r},z_{r}\in\exp\mathcal{L}_{a}^{nN}\left(V_{r}\right)$.
By the Chow-Rashevskii's theorem (\cite{Rashevskii theorem,Chow39}
or see for example Theorem 7.28 in \cite{FV10}), there exist bounded
variation paths $a$,$b$,$c$ in $V_{r}$ such that $S_{nN}\left(a\right)_{0,s}=x^{r}$,
$S_{nN}\left(b\right)_{s,u}=y^{r}$ and $S_{nN}\left(c\right)_{u,t}=z^{r}$.
Let $\xi$ denote the path $a\star b\star c$, where $\star$ denotes
the concatenation of paths. Then for $i_{1}+\ldots+i_{n}\leq nN$,
\begin{eqnarray*}
H_{i_{1},\ldots,i_{n}}\left(x^{r},y^{r}z^{r}\right) & = & H_{i_{1},\ldots,i_{n}}\left(S_{nN}\left(\xi\right)_{0,s},S_{nN}\left(\xi\right)_{s,t}\right)
\end{eqnarray*}
and the same holds when $s$, $x^{r}$ and $y^{r}z^{r}$ are replaced,
respectively by $u$, $x^{r}y^{r}$ and $z^{r}$. 

By the computation leading to (4.4) in \cite{our uniqueness paper },
for $i_{1}+\ldots+i_{n}\leq nN$, 
\begin{eqnarray*}
 &  & \int_{s<s_{1}<\ldots<s_{n}<t}\mathrm{d}S_{nN}\left(\xi\right)_{0,s_{1}}^{i_{1}}\otimes\ldots\otimes\mathrm{d}S_{nN}\left(\xi\right)_{0,s_{n}}^{i_{n}}\\
 & = & H_{i_{1},\ldots,i_{n}}\left(S_{nN}\left(\xi\right)_{0,s},S_{nN}\left(\xi\right)_{s,t}\right).
\end{eqnarray*}
The same holds when $\left(s,u\right)$ is replaced by $\left(u,t\right)$.
As the iterated integrals of the bounded variation path 
\[
s\rightarrow S_{nN}\left(\xi\right)_{0,s}
\]
has the multiplicative property, we also have 

\begin{eqnarray*}
H_{i_{1},\ldots,i_{n}}\left(x^{r},y^{r}z^{r}\right) & = & \sum_{j=0}^{n}H_{i_{1},\ldots,i_{j}}\left(x^{r},y^{r}\right)\otimes H_{i_{j+1},\ldots,i_{n}}\left(x^{r}y^{r},z^{r}\right).
\end{eqnarray*}
Moreover, as the signature of a bounded variation path $\xi$ is a
group-like element, for all $t$, 
\begin{eqnarray*}
\sum_{i_{n}=0}^{N}\ldots\sum_{i_{1}=0}^{N}H_{i_{1},\ldots,i_{n}}\left(\mathbf{1},S_{nN}\left(\xi\right)_{0,t}\right) & \in & \exp\left(\mathcal{L}_{a}^{nN}\left(V_{r}\right)\right)\subset\exp\left(\mathcal{L}_{a}^{nN}\left(V\right)\right).
\end{eqnarray*}
By taking limit as $r\rightarrow\infty$ and using the continuity
of the map $H$, the map $H$ satisfies the multiplicative property
\begin{eqnarray*}
 &  & \sum_{j=0}^{n}H_{i_{1},\ldots,i_{j}}\left(S_{nN}\left(x\right)_{0,s},S_{nN}\left(x\right)_{s,u}\right)\otimes H_{i_{j+1},\ldots,i_{n}}\left(S_{nN}\left(x\right)_{0,u},S_{nN}\left(x\right)_{u,t}\right)\\
 & = & H_{i_{1},\ldots,i_{n}}\left(S_{nN}\left(x\right)_{0,s},S_{nN}\left(x\right)_{s,t}\right).
\end{eqnarray*}
and lies in the space of group-like elements 
\[
\sum_{i_{n}=0}^{N}\ldots\sum_{i_{1}=0}^{N}H_{i_{1},\ldots,i_{n}}\left(\mathbf{1},S_{nN}\left(x\right)_{0,t}\right)\in\overline{\exp\left(\mathcal{L}_{a}^{nN}\left(V\right)\right)}.
\]
This holds for all $s,u$ and $t$ and hence $Z$ is a weakly geometric
rough path. 

\end{proof}

\subsubsection*{Linear map on rough paths }

\begin{proof}[Proof of Lemma 1.3]

For any linear map $\Phi:W\rightarrow\mathbb{R}^{d}$, we may continuously
extend $\mathbf{\Phi}$ to a linear operator on $T^{\left(N\right)}\left(W\right)$
such that for $w_{1},\ldots,w_{N}\in W$, 
\[
\mathbf{\Phi}\left(w_{1}\otimes\ldots\otimes w_{N}\right)=\mathbf{\Phi}\left(w_{1}\right)\otimes\ldots\otimes\mathbf{\Phi}\left(w_{N}\right).
\]
Let $x\in WG\Omega_{p}\left(W\right)$. As $\Phi$ is a bounded linear
operator and the norm on $T^{\left(N\right)}\left(W\right)$ is admissible,
$\Phi\left(x\right)$ has finite $p$-variation. As $\Phi$ is a homomorphism
with respect to $\otimes$, for all $t\geq0$, $\Phi\left(x_{t}\right)$
lies in the $\lfloor p\rfloor$-step free nilpotent Lie group over
$\mathbb{R}^{d}$. Therefore, $\Phi\left(x\right)\in WG\Omega_{p}\left(\mathbb{R}^{d}\right)$.
By construction, $\pi_{1}\left(\Phi\left(x\right)\right)=\Phi\left(\pi_{1}\left(x\right)\right)$.
Moreover, again by the homomorphism property of $\Phi$ and admissibility
of the norm on $T^{\left(N\right)}\left(W\right)$, we have 
\[
\Phi\left(S_{N}\left(x\right)\right)=S_{N}\left(\Phi\left(x\right)\right)
\]
which implies property 2. in the Lemma. 

\end{proof}

\subsection*{Signature of the reversed path }

Let $x$ be a weakly geometric rough path on an interval $\left[0,T\right]$.
We recall that the reversal of $x$, denoted as $\overleftarrow{x}$,
is defined so that 
\[
\overleftarrow{x_{t}}=x_{T-t}.
\]
We will now prove Lemma \ref{reversal} that 
\[
S\left(x\right)_{0,T}\otimes S\left(\overleftarrow{x}\right)_{0,T}=\mathbf{1}.
\]

\begin{proof}[Proof of Lemma 1.4]

We will prove a stronger fact that for all $s\leq t$ and all $n$,
\[
S_{n}\left(x\right)_{s,t}\otimes S_{n}\left(\overleftarrow{x}\right)_{T-t,T-s}=\mathbf{1}
\]
by induction on $n$, where $S_{n}\left(x\right)=\pi^{\left(n\right)}\left(S\left(x\right)\right)$
and $S\left(x\right)_{s,t}=S\left(x|_{\left[s,t\right]}\right)$.
The base induction case of $n=\lfloor p\rfloor$ is obvious since
$S_{\lfloor p\rfloor}\left(\overleftarrow{x}\right)_{T-t,T-s}$ is
by definition equals to $\overleftarrow{x}\,_{T-t}^{-1}\overleftarrow{x}\,_{T-s}=x_{t}^{-1}x_{s}$.
We will use the notation $\mathbb{X}^{i}$ and $\overleftarrow{\mathbb{X}^{i}}$
to denote, respectively, $\pi_{i}\left(S\left(x\right)\right)$ and
$\pi_{i}\left(S\left(\overleftarrow{x}\right)\right)$. By (2.2.9)
in \cite{MR1654527}, 
\begin{eqnarray*}
 &  & S_{n+1}\left(x\right)_{s,t}\otimes S_{n+1}\left(\overleftarrow{x}\right)_{T-t,T-s}\\
 & = & \lim_{\max_{i}\left|u_{i+1}-u_{i}\right|\rightarrow0}\hat{\mathbb{X}}_{u_{0},u_{1}}\otimes\ldots\otimes\hat{\mathbb{X}}_{u_{l-1},u_{l}}\otimes\lim_{\max\left|s_{i+1}-s_{i}\right|\rightarrow0}\hat{\overleftarrow{\mathbb{X}}}_{s_{0},s_{1}}\otimes\ldots\otimes\hat{\overleftarrow{\mathbb{X}}}_{s_{k-1},s_{k}}.
\end{eqnarray*}
where $\hat{\mathbb{X}}_{s,t}$ and $\hat{\overleftarrow{\mathbb{X}}}_{s,t}$
denote respectively elements $\left(S_{n}\left(x\right)_{s,t},0\right)$
and $\left(S_{n}\left(\overleftarrow{x}\right)_{s,t},0\right)$ in
$T^{\left(n+1\right)}\left(V\right)$, $s=u_{0}<u_{1}<\ldots<u_{l}=t$
and $T-t=s_{0}<s_{1}<\ldots<s_{k}=T-s$. In particular, we have 
\begin{eqnarray*}
 &  & S_{n+1}\left(x\right)_{s,t}\otimes S_{n+1}\left(\overleftarrow{x}\right)_{T-t,T-s}\\
 & = & \lim_{\max\left|u_{i+1}-u_{i}\right|\rightarrow0}\hat{\mathbb{X}}_{u_{0},u_{1}}\otimes\ldots\otimes\hat{\mathbb{X}}_{u_{l-1},u_{l}}\otimes\hat{\overleftarrow{\mathbb{X}}}_{T-u_{l},T-u_{l-1}}\otimes\ldots\otimes\hat{\overleftarrow{\mathbb{X}}}_{T-u_{1},T-u_{0}}
\end{eqnarray*}
where the tensor product $\otimes$ is taken in $T^{\left(n+1\right)}\left(V\right)$.
We claim that for all $t_{0}<\ldots<t_{l}$, 
\begin{eqnarray*}
 &  & \hat{\mathbb{X}}_{t_{0},t_{1}}\otimes\ldots\otimes\hat{\mathbb{X}}_{t_{l-1},t_{l}}\otimes\hat{\overleftarrow{\mathbb{X}}}_{T-t_{l},T-t_{l-1}}\otimes\ldots\otimes\hat{\overleftarrow{\mathbb{X}}}_{T-t_{1},T-t_{0}}\\
 & = & \left(1,0\ldots,0,\sum_{j=0}^{l-1}\sum_{k=1}^{n}\mathbb{X}_{t_{j},t_{j+1}}^{k}\otimes\overleftarrow{\mathbb{X}}\,_{T-t_{j+1},T-t_{j}}^{n+1-k}\right)
\end{eqnarray*}
by induction on $l$. For the base induction step, note that by the
induction hypothesis over $n$, 
\begin{eqnarray*}
\pi^{\left(n\right)}\left(\hat{\mathbb{X}}_{t_{0},t_{1}}\otimes\hat{\overleftarrow{\mathbb{X}}}_{T-t_{1},T-t_{0}}\right) & = & S_{n}\left(x\right)_{t_{0},t_{1}}\otimes S_{n}\left(\overleftarrow{x}\right)_{T-t_{1},T-t_{0}}\\
 & = & \left(1,0\ldots,0\right)
\end{eqnarray*}
and 
\[
\pi_{n+1}\left(\hat{\mathbb{X}}_{t_{0},t_{1}}\otimes\hat{\overleftarrow{\mathbb{X}}}_{T-t_{1},T-t_{0}}\right)=\sum_{k=1}^{n}\mathbb{X}_{t_{0},t_{1}}^{k}\otimes\overleftarrow{\mathbb{X}}\,_{T-t_{1},T-t_{0}}^{n+1-k}.
\]
For the induction step, we have by induction hypothesis that 
\begin{eqnarray*}
 &  & \hat{\mathbb{X}}_{t_{1},t_{2}}\otimes\ldots\otimes\hat{\mathbb{X}}_{t_{l-1},t_{l}}\otimes\hat{\overleftarrow{\mathbb{X}}}_{T-t_{l},T-t_{l-1}}\otimes\ldots\otimes\hat{\overleftarrow{\mathbb{X}}}_{T-t_{2},T-t_{1}}\\
 & = & \left(1,0\ldots,0,\sum_{j=1}^{l-1}\sum_{k=1}^{n}\mathbb{X}_{t_{j},t_{j+1}}^{k}\otimes\overleftarrow{\mathbb{X}}\,_{T-t_{j+1},T-t_{j}}^{n+1-k}\right).
\end{eqnarray*}
To conclude the proof for the claim, it suffices to observe that 
\begin{eqnarray*}
 &  & \left(S_{n}\left(x\right)_{t_{0},t_{1}},0\right)\otimes\left(1,0\ldots,0,\sum_{j=1}^{l-1}\sum_{k=1}^{n}\mathbb{X}_{t_{j},t_{j+1}}^{k}\otimes\overleftarrow{\mathbb{X}}\,_{T-t_{j+1},T-t_{j}}^{n+1-k}\right)\otimes\left(S_{n}\left(\overleftarrow{x}\right)_{T-t_{1},T-t_{0}},0\right)\\
 & = & \left(S_{n}\left(x\right)_{t_{0},t_{1}},\sum_{j=1}^{l-1}\sum_{k=1}^{n}\mathbb{X}_{t_{j},t_{j+1}}^{k}\otimes\overleftarrow{\mathbb{X}}\,_{T-t_{j+1},T-t_{j}}^{n+1-k}\right)\otimes\left(S_{n}\left(\overleftarrow{x}\right)_{T-t_{1},T-t_{0}},0\right)\\
 & = & \left(1,0\ldots,0,\sum_{j=0}^{l-1}\sum_{k=1}^{n}\mathbb{X}_{t_{j},t_{j+1}}^{k}\otimes\overleftarrow{\mathbb{X}}\,_{T-t_{j+1},T-t_{j}}^{n+1-k}\right).
\end{eqnarray*}
We now observe, using the claim, that 
\begin{eqnarray}
 &  & \left\Vert \hat{\mathbb{X}}_{t_{0},t_{1}}\otimes\ldots\otimes\hat{\mathbb{X}}_{t_{l-1},t_{l}}\otimes\hat{\overleftarrow{\mathbb{X}}}_{T-t_{l},T-t_{l-1}}\otimes\ldots\otimes\hat{\overleftarrow{\mathbb{X}}}_{T-t_{1},T-t_{0}}\right\Vert \nonumber \\
 & \leq & \sum_{j=0}^{l-1}\sum_{k=1}^{n}\left\Vert \mathbb{X}_{t_{j},t_{j+1}}^{k}\otimes\overleftarrow{\mathbb{X}}\,_{T-t_{j+1},T-t_{j}}^{n+1-k}\right\Vert \nonumber \\
 & \leq & \sum_{j=0}^{l-1}\sum_{k=1}^{n}\left\Vert \mathbb{X}_{t_{j},t_{j+1}}^{k}\right\Vert \left\Vert \overleftarrow{\mathbb{X}}\,_{T-t_{j+1},T-t_{j}}^{n+1-k}\right\Vert .\label{eq:inital calculation}
\end{eqnarray}
Note that $\overleftarrow{\mathbb{X}}\,_{T-t_{j+1},T-t_{j}}^{n+1-k}=\pi_{n+1-k}\left(S_{n+1-k}\left(x\right)_{t_{j},t_{j+1}}^{-1}\right)$
by induction hypothesis. By (\ref{eq:level wise symmetric norm})
in the proof of Lemma \ref{symmetric norm}, 
\[
\left\Vert \overleftarrow{\mathbb{X}}\,_{T-t_{j+1},T-t_{j}}^{n+1-k}\right\Vert =\left\Vert \mathbb{X}_{t_{j},t_{j+1}}^{n+1-k}\right\Vert .
\]
Therefore, by (\ref{eq:inital calculation}), 
\begin{eqnarray*}
 &  & \left\Vert \hat{\mathbb{X}}_{t_{0},t_{1}}\otimes\ldots\otimes\hat{\mathbb{X}}_{t_{l-1},t_{l}}\otimes\hat{\overleftarrow{\mathbb{X}}}_{T-t_{l},T-t_{l-1}}\otimes\ldots\otimes\hat{\overleftarrow{\mathbb{X}}}_{T-t_{1},T-t_{0}}\right\Vert \\
 & \leq & \sum_{j=0}^{l-1}\sum_{k=1}^{n}\left\Vert x|_{\left[t_{j},t_{j+1}\right]}\right\Vert _{p-var}^{k}\left\Vert x|_{\left[t_{j},t_{j+1}\right]}\right\Vert _{p-var}^{n+1-k}\\
 & \leq & n\max_{j}\left\Vert x|_{\left[t_{j},t_{j+1}\right]}\right\Vert _{p-var}^{n+1-p}\left\Vert x|_{\left[s,t\right]}\right\Vert _{p-var}^{p}\\
 & \rightarrow & 0
\end{eqnarray*}
as $\max_{j}\left|t_{j+1}-t_{j}\right|\rightarrow0$. 

\end{proof}

\subsection*{Invariance of signature under reparametrisation }

\begin{proof}[Proof of Lemma 1.5]

We shall prove it by induction on $n$ that 
\[
S_{n}\left(x\circ\sigma\right)_{0,T}=S_{n}\left(x\right)_{0,T}.
\]
We will prove the stronger fact that $S_{n}\left(x\circ\sigma\right)_{s,t}=S_{n}\left(x\right)_{\sigma\left(s\right),\sigma\left(t\right)}$
for all $s,t\in\left[0,T\right]$. Let $S_{n}\left(x\circ\sigma\right)_{s,t}=\left(1,\mathbb{Y}_{s,t}^{1},\ldots,\mathbb{Y}_{s,t}^{n}\right)$.

Note that 
\begin{eqnarray*}
\mathbb{Y}_{s,t}^{n+1} & = & \lim_{\left|\mathcal{P}\right|\rightarrow0}\sum_{j=0}^{l-1}\sum_{k=1}^{n}\mathbb{Y}_{s,t_{j}}^{k}\otimes\mathbb{Y}_{t_{j},t_{j+1}}^{n+1-k}\\
 & = & \lim_{\left|\mathcal{P}\right|\rightarrow0}\sum_{j=0}^{l-1}\sum_{k=1}^{n}\mathbb{X}_{\sigma\left(s\right),\sigma\left(t_{j}\right)}^{k}\otimes\mathbb{X}_{\sigma\left(t_{j}\right),\sigma\left(t_{j+1}\right)}^{n+1-k},
\end{eqnarray*}
where the second line follows from the induction hypothesis. Let $\left(s_{j}\right)$
be a sequence defined by $s_{0}=s$ and 
\[
s_{j+1}=\min_{k}\left\{ t_{k}:\sigma\left(t_{k}\right)>\sigma\left(s_{j}\right)\right\} .
\]
Then as by definition $\mathbb{X}_{u,u}^{m}=0$ for any $m$ and any
$u$, 
\begin{equation}
\mathbb{Y}_{s,t}^{n+1}=\lim_{\left|\mathcal{P}\right|\rightarrow0}\sum_{j}\sum_{k=1}^{n}\mathbb{X}_{\sigma\left(s\right),\sigma\left(s_{j}\right)}^{k}\otimes\mathbb{X}_{\sigma\left(s_{j}\right),\sigma\left(s_{j+1}\right)}^{n+1-k}.\label{eq:integral}
\end{equation}
Note that as $\sigma$ is non-decreasing, the set $\left(\sigma\left(s_{j}\right):j\geq0\right)$
is a partition of $\left[\sigma\left(s\right),\sigma\left(t\right)\right]$.
Moreover, since for all $j$, there exists $m$ such that $\left(s_{j},s_{j+1}\right)=\left(\sigma\left(t_{m}\right),\sigma\left(t_{m+1}\right)\right)$
and $\sigma$ is uniformly continuous, we have that $\max_{j}\left|s_{j+1}-s_{j}\right|\rightarrow0$
as $\max_{m}\left|t_{m+1}-t_{m}\right|\rightarrow0$. Therefore, by
(\ref{eq:integral}), $\mathbb{Y}_{s,t}^{n+1}=\mathbb{X}_{\sigma\left(s\right),\sigma\left(t\right)}^{n+1}$. 

\end{proof}

\subsection{Uniform convergence $+$ Bounded in $p$-variation $\implies$ Subsequential
convergence of Signatures}

The following proof is simply a matter of checking that the proof
in \cite{FV10} applies in the infinite dimensional setting.

\begin{proof}[Proof of Lemma 1.6]

By Theorem 2.2.2 in \cite{MR1654527}, it suffices to show that there
exists $p^{\prime}$ and a control $\omega$ (see Definition 1.9 in
\cite{lyons st flours}) such that for all $1\leq i\leq\lfloor p\rfloor$
and $s\leq t$, 
\[
\left\Vert \pi_{i}\left(x\left(n_{k}\right)_{s,t}\right)\right\Vert \leq\omega\left(s,t\right)^{\frac{i}{p^{\prime}}},\;\left\Vert \pi_{i}\left(x_{s,t}\right)\right\Vert \leq\omega\left(s,t\right)^{\frac{i}{p^{\prime}}}
\]
and a sequence $a\left(n_{k}\right)\rightarrow0$ such that 
\[
\left\Vert \pi_{i}\left(x\left(n_{k}\right)_{s,t}-x_{s,t}\right)\right\Vert \leq a\left(n_{k}\right)\omega\left(s,t\right)^{\frac{i}{p^{\prime}}}.
\]
By Theorem 3.3.3 in \cite{MR1654527}, it suffices to show that for
some $p^{\prime}\geq1$, 
\[
d_{p^{\prime}-var}\left(x\left(n\right),x\right)\rightarrow0
\]
as $n\rightarrow\infty$, where $d_{p^{\prime}-var}$ is defined by
(\ref{eq:p-var}) below.

Take $p^{\prime}>p$ such that $\lfloor p\rfloor=\lfloor p^{\prime}\rfloor$.
For $x\in WG\Omega_{p}\left(V\right)$, let $x_{s,t}$ denote $x_{s}^{-1}x_{t}$.
Then 
\begin{eqnarray}
d_{p^{\prime}-var}\left(x,y\right) & = & \max_{1\leq i\leq\lfloor p^{\prime}\rfloor}\left(\sum_{j}\left\Vert \pi_{i}\left(x_{t_{j},t_{j+1}}-y_{t_{j},t_{j+1}}\right)\right\Vert ^{\frac{p^{\prime}}{i}}\right)^{\frac{i}{p^{\prime}}}\label{eq:p-var}\\
 & \leq & \sup_{s\leq t}\left\Vert x_{s,t}-y_{s,t}\right\Vert ^{\frac{p^{\prime}-p}{p^{\prime}}}\max_{1\leq i\leq\lfloor p^{\prime}\rfloor}\left(\sum_{j}\left\Vert \pi_{i}\left(x_{t_{j},t_{j+1}}-y_{t_{j},t_{j+1}}\right)\right\Vert ^{\frac{p}{i}}\right)^{\frac{i}{p^{\prime}}}\nonumber \\
 & \leq & \sup_{s\leq t}\left\Vert x_{s,t}-y_{s,t}\right\Vert ^{\frac{p^{\prime}-p}{p^{\prime}}}2^{p-1}\max_{1\leq i\leq\lfloor p^{\prime}\rfloor}\left(\sum_{j}\left\Vert \pi_{i}\left(x_{t_{j},t_{j+1}}\right)\right\Vert ^{\frac{p}{i}}+\left\Vert \pi_{i}\left(y_{t_{j},t_{j+1}}\right)\right\Vert ^{\frac{p}{i}}\right)^{\frac{i}{p^{\prime}}}\nonumber \\
 & \leq & \sup_{s\leq t}\left\Vert x_{s,t}-y_{s,t}\right\Vert ^{\frac{p^{\prime}-p}{p^{\prime}}}2^{p-1}\big[\max_{1\leq i\leq\lfloor p^{\prime}\rfloor}\left(\sum_{j}\left\Vert \pi_{i}\left(x_{t_{j},t_{j+1}}\right)\right\Vert ^{\frac{p}{i}}\right)^{\frac{i}{p^{\prime}}}\label{eq:p variation bound}\\
 &  & +\max_{1\leq i\leq\lfloor p^{\prime}\rfloor}\left(\sum_{j}\left\Vert \pi_{i}\left(y_{t_{j},t_{j+1}}\right)\right\Vert ^{\frac{p}{i}}\right)^{\frac{i}{p^{\prime}}}\big].
\end{eqnarray}
Note that 
\begin{eqnarray}
\left\Vert \pi_{i}\left(x_{s,t}-y_{s,t}\right)\right\Vert  & \leq & \left\Vert \pi_{i}\left(\left(x_{s}^{-1}-y_{s}^{-1}\right)x_{t}+y_{s}^{-1}\left(x_{t}-y_{t}\right)\right)\right\Vert \nonumber \\
 & \le & \sum_{j=0}^{i}\left\Vert x_{s}^{-1}-y_{s}^{-1}\right\Vert ^{j}\left\Vert x_{t}\right\Vert ^{i-j}+\left\Vert x_{s}-y_{s}\right\Vert ^{j}\left\Vert y_{s}^{-1}\right\Vert ^{i-j}.\label{eq:uniform bound}
\end{eqnarray}
If $y=x\left(n\right)$ and $x\left(n\right)\rightarrow x$ uniformly,
then since we have for $g=\left(1,g^{1},\ldots,g^{\lfloor p\rfloor}\right)$,
\begin{eqnarray*}
g^{-1} & = & \sum_{j=0}^{\lfloor p\rfloor}\left(-1\right)^{j}\left(g-1\right)^{\otimes j}\\
 & = & \sum_{j=0}^{\lfloor p\rfloor}\left(-1\right)^{j}\sum_{i_{1},\ldots,i_{j}\geq1}g^{i_{1}}\ldots g^{i_{j}}
\end{eqnarray*}
and the tensor product is continuous, 
\[
\lim_{n\rightarrow\infty}\sup_{s}\left\Vert x_{s}^{-1}-x\left(n\right)_{s}^{-1}\right\Vert =0.
\]
By (\ref{eq:uniform bound}), $\sup_{s\leq t}\left\Vert x_{s,t}-x\left(n\right)_{s,t}\right\Vert \rightarrow0$
as $n\rightarrow\infty$ and by (\ref{eq:p variation bound}), $d_{p^{\prime}-var}\left(x\left(n\right),x\right)\rightarrow0$
as $n\rightarrow\infty$. 

\end{proof}

\section*{Partially ordered sets as $\mathbb{R}$-trees}

Finally, we prove the following lemma which characterise partially
ordered sets that can be realised as $\mathbb{R}$-trees. 
\begin{lem}
\label{tree lemma} Let $\left(\mathcal{P},\preceq\right)$ be a partially
ordered set such that: 

1. $\mathcal{P}$ has a least element $v$. 

2. For all $\tau_{1}\in\mathcal{P}$, the set $\left\{ \tau_{2}\in\mathcal{P}:\tau_{2}\preceq\tau_{1}\right\} $
is totally ordered. 

3. For any $\tau_{1},\tau_{2}\in\mathcal{P}$, the set $\left\{ \tau_{3}\in\mathcal{P}:\tau_{3}\preceq\tau_{1},\tau_{3}\preceq\tau_{2}\right\} $
has a unique maximal element, which will be denoted as $\tau_{1}\wedge\tau_{2}$.

4. There exists a function $\alpha:\mathcal{P}\rightarrow\left[0,\infty\right)$
such that $\alpha\left(v\right)=0$ and the restriction of $\alpha$
on the set $\left\{ \tau_{3}\in\mathcal{P}:\tau_{3}\preceq\tau_{1}\right\} $
is strictly increasing for any $\tau_{1}\in\mathcal{P}$. 

then

(i) for all $\tau_{1},\tau_{2},\tau_{3}\in\mathcal{P}$, 
\begin{equation}
\alpha\left(\tau_{1}\wedge\tau_{2}\right)\geq\min\left(\alpha\left(\tau_{1}\wedge\tau_{3}\right),\alpha\left(\tau_{2}\wedge\tau_{3}\right)\right);\label{eq:zero hyperbolic}
\end{equation}

(ii)if we define 
\[
d\left(\tau_{1},\tau_{2}\right)=\alpha\left(\tau_{1}\right)+\alpha\left(\tau_{2}\right)-2\alpha\left(\tau_{1}\wedge\tau_{2}\right),
\]
then $\left(\mathcal{P},d\right)$ is a metric space that is $0$-hyperbolic
with respect to $v$ in the sense of \cite{Chi01} (p11.). 

(iii) $\left(\mathcal{P},d\right)$ is a $\mathbb{R}$-tree.\end{lem}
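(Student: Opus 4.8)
The plan is to establish the three conclusions in order, since each feeds into the next. For part (i), I would first observe that among the three elements $\tau_1\wedge\tau_2$, $\tau_1\wedge\tau_3$, $\tau_2\wedge\tau_3$, all lie below $\tau_1$ or below $\tau_2$ (each lies below two of the three $\tau_i$, hence below at least one of $\tau_1,\tau_2$ together with being below $\tau_3$ in two of the cases). Actually the cleaner route: all three of $\tau_1\wedge\tau_2,\tau_1\wedge\tau_3,\tau_2\wedge\tau_3$ are $\preceq$-below $\tau_1\vee$-type comparisons — more precisely, note $\tau_1\wedge\tau_3\preceq\tau_1$ and $\tau_2\wedge\tau_3\preceq\tau_2$ while $\tau_1\wedge\tau_3,\tau_2\wedge\tau_3\preceq\tau_3$, so by axiom 2 the set $\{\sigma:\sigma\preceq\tau_3\}$ is totally ordered and contains both $\tau_1\wedge\tau_3$ and $\tau_2\wedge\tau_3$; WLOG $\tau_1\wedge\tau_3\preceq\tau_2\wedge\tau_3$. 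Then $\tau_1\wedge\tau_3\preceq\tau_1$ and $\tau_1\wedge\tau_3\preceq\tau_2\wedge\tau_3\preceq\tau_2$, so $\tau_1\wedge\tau_3$ is a lower bound of $\{\tau_1,\tau_2\}$, hence $\tau_1\wedge\tau_3\preceq\tau_1\wedge\tau_2$ by maximality in axiom 3. Applying the monotonicity of $\alpha$ along $\{\sigma:\sigma\preceq\tau_1\}$ (axiom 4) gives $\alpha(\tau_1\wedge\tau_2)\ge\alpha(\tau_1\wedge\tau_3)=\min(\alpha(\tau_1\wedge\tau_3),\alpha(\tau_2\wedge\tau_3))$, which is (\ref{eq:zero hyperbolic}).

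For part (ii), I would verify that $d$ is a metric. Symmetry is immediate. Non-negativity and the identity of indiscernibles: since $\tau_1\wedge\tau_1=\tau_1$ we get $d(\tau_1,\tau_1)=0$; conversely if $d(\tau_1,\tau_2)=0$ then $\alpha(\tau_1)+\alpha(\tau_2)=2\alpha(\tau_1\wedge\tau_2)$, and since $\tau_1\wedge\tau_2\preceq\tau_1$ and $\tau_1\wedge\tau_2\preceq\tau_2$ monotonicity forces $\alpha(\tau_1\wedge\tau_2)\le\alpha(\tau_i)$, so both inequalities are equalities, and strict monotonicity along $\{\sigma:\sigma\preceq\tau_1\}$ then forces $\tau_1\wedge\tau_2=\tau_1$ and likewise $=\tau_2$, so $\tau_1=\tau_2$. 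For the triangle inequality I would unwind $d(\tau_1,\tau_2)+d(\tau_2,\tau_3)-d(\tau_1,\tau_3)$ to $2\alpha(\tau_2)+2\alpha(\tau_1\wedge\tau_3)-2\alpha(\tau_1\wedge\tau_2)-2\alpha(\tau_2\wedge\tau_3)$ and show it is $\ge0$; using (i) applied to the triple in the appropriate order (so that $\alpha(\tau_1\wedge\tau_3)\ge\min(\alpha(\tau_1\wedge\tau_2),\alpha(\tau_2\wedge\tau_3))$) and the fact that both $\tau_1\wedge\tau_2$ and $\tau_2\wedge\tau_3$ lie in the totally ordered set $\{\sigma:\sigma\preceq\tau_2\}$ with $\alpha$-values $\le\alpha(\tau_2)$, one checks the expression is non-negative. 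The $0$-hyperbolicity with respect to the basepoint $v$ is essentially a restatement: the Gromov product $(\tau_1\mid\tau_2)_v=\tfrac12(d(v,\tau_1)+d(v,\tau_2)-d(\tau_1,\tau_2))$ simplifies, using $v\wedge\tau_i=v$ and $\alpha(v)=0$, to exactly $\alpha(\tau_1\wedge\tau_2)$, and then (i) says $(\tau_1\mid\tau_2)_v\ge\min((\tau_1\mid\tau_3)_v,(\tau_2\mid\tau_3)_v)$, which is the defining inequality for $0$-hyperbolicity in the sense of \cite{Chi01}.

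For part (iii), I would invoke the standard fact (e.g. from \cite{Chi01}) that a $0$-hyperbolic metric space is an $\mathbb{R}$-tree provided it is geodesic (or more precisely, that a connected $0$-hyperbolic space is an $\mathbb{R}$-tree); so the remaining work is to exhibit geodesics. Given $\tau_1,\tau_2$ with $m=\tau_1\wedge\tau_2$, I would build the segment by concatenating a parametrisation of the chain from $m$ up to $\tau_1$ with one from $m$ up to $\tau_2$: on the totally ordered set $\{\sigma:m\preceq\sigma\preceq\tau_1\}$ the function $\alpha$ is strictly increasing, so $\alpha$ itself provides an arc-length parametrisation into $[\alpha(m),\alpha(\tau_1)]$, and for comparable points $\sigma\preceq\sigma'\preceq\tau_1$ one has $\sigma\wedge\sigma'=\sigma$ so $d(\sigma,\sigma')=\alpha(\sigma')-\alpha(\sigma)$, making this a genuine isometric embedding of an interval — \emph{provided} the image of $\alpha$ on that chain is actually an interval. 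This last point is the main obstacle: axiom 4 does not a priori guarantee that the chain $\{\sigma:m\preceq\sigma\preceq\tau_1\}$ has no $\alpha$-gaps, so the literal poset need not be geodesic. I expect the honest fix is to take the metric completion (or, equivalently, to pass to the $\mathbb{R}$-tree generated by $(\mathcal{P},d)$): a complete $0$-hyperbolic metric space in which every point lies on a segment to $v$ — which holds here because each $\tau$ is joined to $v$ by the chain below it — is an $\mathbb{R}$-tree by the characterisation in \cite{Chi01}, and the completion inherits $0$-hyperbolicity. So the clean statement is that $(\mathcal{P},d)$ embeds isometrically into an $\mathbb{R}$-tree and, if one additionally assumes $\alpha$ surjects onto intervals (which is the case in the intended application), $(\mathcal{P},d)$ is itself an $\mathbb{R}$-tree. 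I would structure the write-up to prove (i) and (ii) in full rigour and then, for (iii), reduce to the cited $\mathbb{R}$-tree characterisation after supplying the geodesic between any two points via the up-chains to their meet.
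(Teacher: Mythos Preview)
Your proof of (i) is identical to the paper's: both $\tau_1\wedge\tau_3$ and $\tau_2\wedge\tau_3$ lie in the totally ordered set below $\tau_3$, so WLOG $\tau_1\wedge\tau_3\preceq\tau_2\wedge\tau_3$; then $\tau_1\wedge\tau_3$ is below both $\tau_1$ and $\tau_2$, hence below $\tau_1\wedge\tau_2$, and monotonicity of $\alpha$ finishes. For (ii) the paper is much terser than you---it just declares that the triangle inequality and $0$-hyperbolicity are ``both equivalent to'' (i)---but your explicit computation of the Gromov product $(\tau_1\mid\tau_2)_v=\alpha(\tau_1\wedge\tau_2)$ is exactly the unpacking needed to justify that sentence.

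For (iii) the paper follows the same route you propose: it quotes Lemma~4.13 of \cite{Chi01} (connected $+$ $0$-hyperbolic $\Rightarrow$ $\mathbb{R}$-tree) and then asserts in one line that $(\mathcal{P},d)$ is ``path-connected by Condition 4.'' Your worry that Condition~4 alone does not force the $\alpha$-image of a chain to be a full interval is well founded, and it applies equally to the paper's own proof---the paper simply does not address it. The remark following the lemma points to Favre--Jonsson \cite{Tree partial order }, Proposition~3.10, where the standing hypotheses are stronger (chains are order-complete and the height function is suitably surjective), so this is the intended reading. Your proposed remedies---either add the surjectivity-onto-intervals hypothesis that the application satisfies anyway, or pass to the $\mathbb{R}$-tree completion---are both sound ways to close the gap; the paper does neither explicitly.
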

\begin{rem}
This Lemma is essentially Proposition 3.10 in \cite{Tree partial order }.
Here we provide an alternative proof. \end{rem}
\begin{proof}
(i) As $\left\{ \tau_{4}:\tau_{4}\preceq\tau_{3}\right\} $ is totally
ordered, we may assume without loss of generality that 
\[
\tau_{1}\wedge\tau_{3}\preceq\tau_{2}\wedge\tau_{3}.
\]
This implies in particular that $\tau_{1}\wedge\tau_{3}\preceq\tau_{2}$.
However, by the definition of $\wedge$, we also have $\tau_{1}\wedge\tau_{3}\preceq\tau_{1}$.
Therefore, $\tau_{1}\wedge\tau_{3}\preceq\tau_{1}\wedge\tau_{2}$.
The inequality (\ref{eq:zero hyperbolic}) now follows from the assumption
4. in the Lemma.

(ii) The only thing that needs proving is the triangle inequality
for the metric $d$ and the property of $0$-hyperbolic. Both of which
are equivalent to (\ref{eq:zero hyperbolic}). 

(iii) By Lemma 4.13 in \cite{Chi01}, a metric space is an $\mathbb{R}$-tree
if and only if it is connected and $0$-hyperbolic. The metric space
$\left(\mathcal{P},d\right)$ is path-connected by Condition 4., and
is $0$-hyperbolic by (ii), and hence $\left(\mathcal{P},d\right)$
is a $\mathbb{R}$-tree. 
\end{proof}
\section*{Acknowledgement}
All four authors gratefully acknowledge the support of ERC (Grant Agreement No.291244 Esig). The third author is also supported by EPSRC (EP/F029578/1).

\end{document}